\documentclass[12pt,oneside,a4papper]{osajnl}
\usepackage{amsthm}
\usepackage{comment}
\journal{jocn} 

\setboolean{shortarticle}{false}

\title{\centering 
Decay of solutions and bilinear control to trajectories of a 1D degenerate parabolic system}

\author[1,2,*]{\centering Alfredo S. Gamboa}
\author[3]{André da Rocha Lopes}
\author[4]{Luis P. Yapu}

\affil[1]{Universidade do Estado do Rio de Janeiro, Escola Politécnica, Nova Friburgo, Brazil}
\affil[2]{Universidad Privada Boliviana, Departamento de Ciencias Exactas, Cochabamba, Bolivia}
\affil[3]{Universidade do Estado do Rio de Janeiro, Instituto de Matemática e Estatística, Rio de Janeiro, Brazil}
\affil[4]{Universidade Federal Fluminense, Instituto de Matemática e Estatística, Niterói, Brazil}

\affil[*]{\centering Contato: alfredo.soliz@iprj.uerj.br}

 \dates{\centering Compiled \today}

\doi{}

\begin{abstract}
This paper is concerned with the decay of solutions and the analysis of the local and global exact controllability to trajectories for a class of one-dimensional nonlinear parabolic systems with weakly degenerate diffusion coefficients. The system is controlled through the coefficient of the reaction term. Our approach relies on a well-known local inversion method combined with suitable a priori estimates specifically adapted to the degenerate setting.

\end{abstract}

\definecolor{mygreen}{RGB}{44,162,67}
\definecolor{mylilas}{RGB}{186,85,211}
\setboolean{displaycopyright}{false}

\newcommand{\cara}{\mathbb{1}}

\newtheorem{teo}{Theorem}
\newtheorem{myth}{Theorem}
\newtheorem{mydef}{Definition}
\newtheorem{propo}{Proposition}
\newtheorem{coro}{Corollary}

\newcommand{\R}{\mathbb{R}}

\setboolean{displaycopyright}{false}

\begin{document}

\maketitle

\textbf{MSC Classification (2020)}: Primary: 35K65, 93B05; Secondary: 93C10. 

\textbf{keywords}: Degenerate parabolic system, Nonlinear systems in Control Theory, Bilinear control, Carleman inequalities.

\section*{Introduction}

\qquad In the present work, we address the controllability properties for the following bilinear degenerate parabolic system:
\begin{equation}\label{eq:PDE}
\left\{
\begin{array}
    [c]{lll}%
    y_t - b(t)\left(a(x) y_{x}\right)_{x} + B_1 (x,t) \sqrt{a} y_x +F_1(x,t,y,z)=h\cara_{\omega}y & (x,t) \in (0,1)\times (0,T),&
    \\
    z_t - b(t)\left(a(x) z_{x}\right)_{x} + B_2 (x,t) \sqrt{a} z_x+ F_2(x,t,y,z)=0 & (x,t) \in (0,1)\times (0,T),&
    \\
    y(0,t)=y(1,t)=z(0,t)=z(1,t)=0 & t \in (0,T), & \\
    y(x,0)=y_{0}(x), \ z(x,0)=z_{0}(x) & x \in (0,1), & 
\end{array}
\right.  %
\end{equation}
where, $(y_0 ,z_0) \in [H_{a}^1 (0,1)]^2$, $T>0$ is fixed and $\cara_{\omega}$ stands for a characteristic function of a nonempty open subset $\omega$ of $(0,1)$. Here, $(y,z)$ denotes the state variable, while $h$ represents the control function. The coefficient $a(x)$ is a diffusion parameter that degenerates at the boundary point $x=0$. Such degeneracy characterizes diffusion processes in heterogeneous media where the transport mechanism becomes progressively weaker near a critical region of the spatial domain. A representative example is provided by the coefficient $a(x)=x^\alpha$, with $0<\alpha<1$, which models a medium whose diffusivity decreases in the vicinity of $x=0$. Structures of this type naturally arise in thermal processes in nonhomogeneous materials, sedimentation phenomena, and diffusion in anisotropic media.

The non-autonomous character of system (\ref{eq:PDE}) reflects environments whose physical properties evolve over time. Indeed, the coefficient $b(t)$ may represent time-dependent conductivity, external forcing, or seasonal effects, while the lower-order terms may describe convection, interaction, or reaction mechanisms depending on the application under consideration.

Throughout the whole paper, the following hypotheses will be assumed:
\begin{itemize}
    \item[{\textbf {H1}}] $a \in C([0,1]) \cap C^1 ((0,1])$ satisfying $a(0)=0$, $a>0$ on $(0,1]$, $a' \geq 0$ and $$xa'(x)\leq Ka(x),\,\forall x \in [0,1]\,\,\text{and some}\,\,K \in [0,1).$$
    
    \item[{\textbf {H2}}] For $i=1,2$, $F_i$ are $C^1$ functions, with bounded derivatives, satisfying $F_i(x,t,0,0)=0$ and 
    \begin{equation*}\label{hf}
        \begin{split}
            \vert F_i(x,t,r_1,s_1)-F_i(x,t,r_2,s_2)- \partial_3 F_i(x,t,r_2,s_2)(r_1 -r_2) - \partial_4 F_i(x,t,r_2,s_2)(s_1 -s_2)\vert \\
            \qquad \leq C (\vert r_1 - r_2\vert^2 + \vert s_1 - s_2\vert^2) \,\, \text{for any} \,\, (r_1 ,r_2,s_1,s_2) \in \mathbb{R} \times \mathbb{R} \times \mathbb{R} \times \mathbb{R}.        
        \end{split}
    \end{equation*}
    We also consider $d_{ij}:=\partial_{j+2}F_i (x,t,0,0) \in L^{\infty}((0,1) \times (0,T))$, for any $i,j \in \{1,2\}$ and let us assume that there exists $\omega_0 \subset \subset \omega$ such that
    $$\inf \{ a_{21}: (x,t) \in \omega_0 \times [0,T]\}>0\,.$$
    
    \item[{\textbf {H3}}] The coefficients $B_i (x,t),\,(i=1,2)$ are bounded and the time dependent function $b(t) \in W^{1,\infty} (0,T)$ and satisfy $b(t)>b_0 $, for some $b_0 > 0$. We also suppose that $B_1 (x,t) \sqrt{a}$ is of class $C^1$.
    
    
\end{itemize}

In order to deal with the controllability properties of problem (\ref{eq:PDE}), it is necessary to introduce the following Sobolev weighted spaces
\begin{equation*}
    \begin{split}
        H^1_a(0,1):=&\left\{ w\in L^2(0,1) \ : \ w \ \text{is absolutely continuous in} \ \ (0,1], \right. \\
        &\left. \quad \sqrt{a}w_{x}\in L^2(0,1) \ \text{and} \  w(0)=w(1)=0 \right\}        
    \end{split}
\end{equation*}
and 
$$H^2_a(0,1):=\left\{w\in H^1_a(0,1) \ : \ aw_{x}\in H^1(0,1) \right\},$$
with respective norms
$$
\Vert w\Vert^{2}_{H^1_a(0,1)}:= \Vert w\Vert^{2}_{L^2(0,1)} + \Vert \sqrt{a}w_{x}\Vert^{2}_{L^2(0,1)} \,\, \text{and} \,\,\Vert w\Vert^{2}_{H^2_a(0,1)}:= \Vert w\Vert^{2}_{H^1_a(0,1)} + \Vert (aw_{x})_{x}\Vert^{2}_{L^2(0,1)}\,.
$$

The notion of local controllability that we consider in this work is defined as follows.
\begin{mydef}
    It is said that (\ref{eq:PDE}) is null controllable at time $T$ if, for any $(y_0 ,z_0 ) \in [H_{a}^{1}(0,1)]^2$ there exists a control function $h \in L^2 (\omega \times (0,T))$ such that the associated state $(y,z)$ satisfies
    \begin{equation}\label{nc}
    y(\cdot ,T)=z(\cdot ,T)=0 \,\,\,\text{in}\,\,\, (0,1)\,.
    \end{equation}
\end{mydef}

The goal of this work is to study the controllability properties of (\ref{eq:PDE}) in the following sense: We consider a trajectory $(\widetilde{u}, \widetilde{v}) \in [L^2 (0,T;H^{2}_{a}(0,1))]^2$ satisfying the uncontrolled system
\begin{equation}\label{traj.prob1}
    \left\{
    \begin{array}[c]{lll}%
        \widetilde{y}_{t}- b(t)\left(a(x) \widetilde{y}_{x}\right)_{x} + B_1 (x,t)\sqrt{a} \widetilde{y}_x +F_1(x,t,\widetilde{y},\widetilde{z})=0 & (x,t) \in (0,1)\times (0,T), &
        \\
        \widetilde{z}_{t}- b(t)\left(a(x) \widetilde{z}_{x}\right)_{x} + B_2 (x,t)\sqrt{a} \widetilde{z}_x+F_2(x,t,\widetilde{y},\widetilde{z})=0 & (x,t) \in (0,1)\times (0,T), &
        \\
        \widetilde{y}(0,t)=\widetilde{y}(1,t)=\widetilde{z}(0,t)=\widetilde{z}(1,t)=0 & t \in (0,T), & \\
        \widetilde{y}(x,0)=\widetilde{y}_{0}(x), \ \widetilde{z}(x,0)=\widetilde{z}_{0}(x)  & x \in (0,1). & 
    \end{array}
    \right.  %
\end{equation}

\begin{mydef}
    It is said that (\ref{eq:PDE}) is exactly controllable to the trajectory $(\widetilde{y}, \widetilde{z})$ at time $T$ if, for any $(y_0 ,z_0)\in [H_{a}^{1}(0,1)]^2$ there exists a control function $h \in L^2 (\omega \times (0,T))$ such that the associated state $(y,z)$ satisfies
    \begin{equation}\label{ct}
    y(\cdot ,T)= \widetilde{y}(\cdot ,T) \,\,\,\text{and}\,\,\, z(\cdot ,T)= \widetilde{z}(\cdot ,T)\,\,\,\text{in}\,\,\, (0,1) \,.
    \end{equation}
\end{mydef}

Control of degenerate parabolic equations is a fairly well-developed subject in Control Theory. It is important to remark that semilinear nondegenerate equations have been studied extensively in the last decades, see \cite{dcbz-02, fr-71,clm-12,cz-00,FurImanu-96,lr-95} in the context of bounded cylindrical domains and \cite{cllp-25,lcmml-16,ll-22} in more general domains. Moreover, in the context of degenerate reaction-diffusion equations there are several interesting models, such as models in mathematical biology and in a wide variety of physical situations, see for instance \cite{cpz-04,kalash-87,nss-00}. In recent years several contributions treating degenerate PDEs appeared, in particular we mention the works by Cannarsa and collaborators \cite{Alabau_cannarsa_fragnelli-06,abcl-17,cmv1-04,cmv2-08,cmv3-16,cmv4-17,cty-10}.

 The study of controllability for non-autonomous degenerate coupled parabolic equations is driven by real-world phenomena where diffusion rates vanish at boundaries or critical points, and the system's underlying parameters or environmental factors change over time. Some recent contributions concerning the controllability of degenerate non-autonomous equations are the ones of Akil, Fragnelli and Ismail \cite{AkilFragnelliIsmail2025} for Robin boundary conditions, \cite{fragneliautonomo}, where a climatological application is presented, and a more general result for Dirichlet and Robin boundary conditions \cite{fragnelli-general-26}. On the other hand, it is worth mentioning that, in \cite{GYL-Carleman-2025}, the authors established a local null controllability result for the following linear non-autonomous degenerate coupled system:
\begin{equation}\label{ajl}
    \begin{cases}
		y_{t}-b(t)\left({a}(x)y_x\right)_x+ d_1 (x,t)\sqrt{a}y_{x}+b_{11}(x,t)y\\
        \qquad + b_{12}(x,t)z=h \cara_{_{{\omega}}} + H_1, & \ \ \  (x,t) \in (0,1)\times (0,T),\\
        z_{t}-b(t)\left({a}(x)z_x\right)_x+ d_2 (x,t)\sqrt{a}z_{x}+d_{21}(x,t)y\\
        \qquad + d_{22}(x,t)z=H_2, & \ \ \  (x,t) \in (0,1)\times (0,T),\\
		y(0,t)=y(1,t)=z(0,t)=z(1,t)=0 & \ \ \ t \in (0,T),\\
		 y(x,0)=y_{0}(x), \ z(x,0)=z_{0}(x) & \ \ \ x \in (0,1).
    \end{cases}
\end{equation}
where the coefficients $d_i(x,t)$ and $b_{ij}(x,t),\,(i,j=1,2)$ are bounded and $H_i \in L^2((0,1)\times (0,T)), \,(i=1,2)$. In the present paper, motivated by possible extensions to other relevant and realistic problems, we improve the results obtained in \cite{GYL-Carleman-2025} by considering a multiplicative control $h\cara y$, incorporating semilinear terms $F_i(x,t,y,z),\,(i=1,2)$, and establishing decay estimates for the solutions, as well as local and global exact controllability to trajectories for system (\ref{eq:PDE}).

\color{black}
The presence of a bilinear control that acts through the reaction term represents a realistic mechanism for influencing the system. In contrast to additive controls, the control considered here acts proportionally to the current state, modulating growth or decay rates. This type of control appears, for example, in chemical reactions where catalysts affect reaction rates, in ecological models where reproduction rates are regulated, or in thermal systems with feedback-dependent dissipation.

From a practical standpoint, the controllability properties studied in this work correspond to the ability to steer the system toward a desired configuration. Null controllability is associated with the suppression or extinction of a physical quantity, while controllability to a positive trajectory reflects the possibility of tracking a prescribed evolution. These objectives are of significant interest in applications where one seeks either to stabilize or to regulate complex dynamical systems under realistic constraints.

The goal of this work is to give some results on the bilinear controllability of a degenerate parabolic system. We refer to the early paper \cite{bms-82} on controllability of an abstract infinite dimensional bilinear system, which appears to be the first work on this subject in the framework of PDEs. In \cite{khapalov-02}, the author discussed the non-negative approximate controllability of a parabolic system with superlinear term governed by a bilinear control. Moreover, in \cite{khapalov_newton-02} he also discussed the bilinear null-controllability of a parabolic system with the reaction term satisfying Newton's Law. We also refer to the article \cite{pzh-06}, on exact controllability of parabolic systems. Important progress has been made recently in the 
analysis of bilinear controllability of parabolic equations, we cite, for instance, Alabau-Boussouira et al. \cite{abcu-22,abcu-24} and Cannarsa et al. \cite{cdu-22}. In the context of degenerate hyperbolic equations, we mention Cannarsa et al. \cite{cmu-23}.



The main results in this paper are as follows.
\begin{myth}
    \label{th1}
    Under the previous assumptions on the functions $a$ and $F_i , (i=1,2)$, the nonlinear system (\ref{eq:PDE}) is locally exactly controllable to the trajectory $(\widetilde{y} ,\widetilde{z})$ at any time $T>0$. In other words, there exists $\varepsilon >0$ such that, for any $(y_0,z_0) \in [H_{a}^{1}(0,1)]^2$ with $$\Vert (y_0 - \widetilde{y}_0 ,z_0 - \widetilde{z}_0) \Vert_{[H_{a}^{1}(0,1)]^2} \leq \varepsilon\,,$$ there exists a control $h \in L^2 (\omega \times (0,T))$ such that the associated state $(y,z)$ satisfies (\ref{ct}).
\end{myth}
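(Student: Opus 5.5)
Since the control acts bilinearly through $h\cara_\omega y$, the plan is to turn the problem into an additive-control problem for the difference with the trajectory, and then use the local inversion (Lyusternik) method together with the Carleman-based null controllability of the linear system (\ref{ajl}) from \cite{GYL-Carleman-2025}.

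\textbf{Reduction to an additive control.} Set $u=y-\widetilde{y}$ and $v=z-\widetilde{z}$. Then
\[
u_t-b(t)(au_x)_x+B_1\sqrt{a}\,u_x+F_1(x,t,\widetilde y+u,\widetilde z+v)-F_1(x,t,\widetilde y,\widetilde z)=h\cara_\omega(\widetilde y+u),
\]
and $v$ satisfies the analogous equation with zero right-hand side. Expanding $F_i$ around $(\widetilde y,\widetilde z)$ by \textbf{H2} gives a linear part with bounded coefficients $\partial_{j+2}F_i(x,t,\widetilde y,\widetilde z)$ plus quadratic remainders. The natural step is to introduce the new additive control $k:=h(\widetilde y+u)$ on $\omega$. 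This requires $\widetilde y+u$ to stay away from zero on the support of the control, so I would either assume (as the introduction hints with ``positive trajectory'') that $|\widetilde y|\ge c_0>0$ on $\omega_0\times(0,T)$, or localize the control to a set where this holds. If this is not available, it is the first place the argument could fail. Once $u$ is small in $L^\infty(\omega_0\times(0,T))$, $h=k/(\widetilde y+u)$ then recovers the bilinear control.

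\textbf{Linear step.} The linearized system is exactly of the form (\ref{ajl}) with coupling coefficient $\partial_3F_2(\cdot,\widetilde y,\widetilde z)$. I will assume the positivity of this coefficient on $\omega_0$ is the one encoded in \textbf{H2} (the hypothesis is written with $a_{21}$). I would invoke the Carleman estimate and observability inequality of \cite{GYL-Carleman-2025} for the adjoint system, observed only through the first component on $\omega$. This yields null controllability with weighted estimates: for right-hand sides $H_i$ in a Carleman-weighted $L^2$ space $\rho H_i\in L^2$, there is a control $k$ and a state $(u,v)$ such that $\rho_0(u,v)$, $\rho_0 k$ and $\rho_0(u_t-b(au_x)_x)$ are in $L^2$, and $\sqrt a\,u_x\in L^\infty(0,T;L^2)$, with $\rho_0\to\infty$ as $t\to T$. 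The usual Fursikov--Imanuvilov argument provides this: minimize a weighted functional and then apply energy estimates, multiplying by $-(au_x)_x$ and using \textbf{H1}, \textbf{H3} and $b\in W^{1,\infty}$. These estimates force $u(T)=v(T)=0$.

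\textbf{Inversion and main obstacle.} On the Banach space $E$ of such tuples $(u,v,k)$, define
\[
\mathcal A(u,v,k)=\bigl(\text{nonlinear equations}-(k\cara_\omega,0),\ u(0),\ v(0)\bigr)\in F\times[H^1_a(0,1)]^2 .
\]
I would show that $\mathcal A$ is $C^1$ near $0$ with surjective derivative, surjectivity being exactly the linear step, and then apply Lyusternik's inverse mapping theorem to get $\varepsilon$. The main obstacle is proving that the quadratic remainders $|u|^2,|v|^2$, after multiplication by the weight $\rho$, lie in the right-hand-side space $F$. This needs a weight comparison $\rho\le C\rho_0^2$, which can be arranged by the choice of exponents in the Carleman weights. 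It also needs an $L^4$ or $L^\infty$ bound on $\rho_0^{1/2}u$. In the degenerate setting the embedding $H^1_a\hookrightarrow L^\infty$ holds because $K<1$ gives $1/a\in L^1$, so $|w(x)|\le\|\sqrt a w_x\|_{L^2}\|1/a\|_{L^1}^{1/2}$, and this supplies the nonlinear estimate. Finally, the smallness of $u$ in $L^\infty(\omega_0\times(0,T))$ needed to divide by $\widetilde y+u$ also comes from this embedding, combined with $L^\infty(0,T;H^1_a)$ control of the state for $\varepsilon$ small.
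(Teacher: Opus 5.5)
Your proposal is correct, but it handles the bilinear term differently from the paper.

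\textbf{The paper's route.} The paper sets $\overline{h}\cara_{\omega_0}:=h\cara_{\omega}\widetilde{y}$, so that $h\cara_{\omega}(w_1+\widetilde{y})=\overline{h}\cara_{\omega_0}+\overline{h}\cara_{\omega_0}w_1/\widetilde{y}$. The product $\overline{h}w_1/\widetilde{y}$ then stays inside the nonlinear part $N$ of $\mathcal{A}$. To place it in the weighted space $\mathsf{Z}$ (the term $I_3$) and to prove continuity of $D\mathcal{A}$ (the term $E_1$), the paper needs the extra regularity $\rho_5\overline{h}\in L^\infty(0,T;H^1_a(0,1))$ from Corollary \ref{rcont}. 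That regularity is proved only for the Lax--Milgram control and is not part of the norm of $\mathsf{V}$. Strictly, it has to be built into $\mathsf{V}$, with surjectivity of $D\mathcal{A}(0)$ then checked through Corollary \ref{rcont}.

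\textbf{Your route.} Your choice $k:=h(\widetilde{y}+u)$ turns $\mathcal{A}$ into the plain semilinear map with additive control. The only nonlinearity is the quadratic remainder of $F_i$, no regularity of the control is ever needed, and the bilinear control is recovered afterwards as $h=k/(\widetilde{y}+u)$ on $\omega_0$. The price is the pointwise requirement $\|u\|_{L^\infty(\omega_0\times(0,T))}\le c_0/2$. You correctly obtain it from three ingredients: the Lyusternik bound $\|(u,v,k)\|_E\le C\varepsilon$, the positive lower bound of the weights, and the embedding $H^1_a(0,1)\hookrightarrow L^\infty(0,1)$. The embedding is valid because $xa'\le Ka$ gives $a(x)\ge a(1)x^K$ with $K<1$. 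The paper divides only by $\widetilde{y}$ and needs no such smallness.

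\textbf{Shared hypotheses.} Both routes rely on $|\widetilde{y}|\ge c_0>0$ on $\omega_0\times(0,T)$, which the paper assumes at the start of Section \ref{sec:trajectory}. Both also need a lower bound for $\partial_3F_2(x,t,\widetilde{y},\widetilde{z})$ on $\omega_0$, which you rightly read into \textbf{H2}.

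\textbf{A slip to fix.} Putting $\rho|u|^2$ in $L^2(Q)$ requires $\rho^{1/2}u\in L^4(Q)$. With $\rho\le C\rho_*^2$, this means a bound on $\rho_*u$ in $L^\infty(Q)$, not on $\rho_0^{1/2}u$. Here $\rho_*$ is the weight in the sup-in-time $H^1_a$ energy estimate; the paper uses $\rho_2\le C\rho_4^2$ together with $\sup_t\rho_4\|\sqrt{a}\,w_{1,x}\|_{L^2(0,1)}$. This weighted sup estimate, not merely $\sqrt{a}\,u_x\in L^\infty(0,T;L^2(0,1))$, must be controlled by the norm of $E$.
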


\begin{myth}
\label{th2}
Under the previous assumptions on the functions $a$ and $F_i , (i=1,2)$, the nonlinear system (\ref{eq:PDE}) is globally exactly controllable to the trajectory $(\widetilde{y} ,\widetilde{z})$ at any sufficiently large time, that is, there exists $\widetilde{T}>0$ such that for any $(y_0,z_0) \in [H_{a}^{1}(0,1)]^2$ and $T>\widetilde{T}$, there exists a control $h \in L^{2}(\omega \times (0,T))$ such that the associated state $(y,z)$ satisfies (\ref{ct}).
\end{myth}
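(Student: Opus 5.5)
The plan is a two-phase strategy: first let the system evolve freely until it is uniformly close to the target trajectory, then apply the local result, Theorem \ref{th1}, on a final time window of fixed length. Write $w=y-\widetilde{y}$, $v=z-\widetilde{z}$. Subtracting (\ref{traj.prob1}) from (\ref{eq:PDE}) gives a system for $(w,v)$ with the same degenerate principal part $b(t)(a\,\cdot_x)_x$ and the same drifts $B_i\sqrt{a}\,\partial_x$. The zero-order terms are $F_i(x,t,y,z)-F_i(x,t,\widetilde{y},\widetilde{z})$, and the control term is $h\cara_{\omega}(w+\widetilde{y})$.

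\textbf{Step 1 (fixing the final window).} Set $\tau=1$ and apply Theorem \ref{th1} on $[T-1,T]$. This gives a constant $\varepsilon_1>0$ with the following property: whenever $\|(w,v)(T-1)\|_{[H^1_a(0,1)]^2}\le\varepsilon_1$, there is a control on $[T-1,T]$ driving the state onto $(\widetilde{y},\widetilde{z})$ at time $T$. I must check that $\varepsilon_1$ depends only on the window length and on uniform bounds for the coefficients. This requires the trajectory $(\widetilde{y},\widetilde{z})$ to satisfy $\sup_t\|(\widetilde{y},\widetilde{z})(t)\|_{H^1_a}<\infty$, so that the linearized coefficients $\partial_{3}F_i,\partial_4F_i$ and the observability constants of the Carleman estimate from \cite{GYL-Carleman-2025} are uniform in the starting time $T-1$.

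\textbf{Step 2 (uniform decay with $h\equiv 0$ on $[0,T-1]$).} This is the main obstacle, and it is where the decay result of the paper must be used. Linear exponential decay $\|(w,v)(t)\|\le Ce^{-\lambda t}\|(w_0,v_0)\|$ would only give a time depending on $\|(w_0,v_0)\|$. To obtain a $\widetilde{T}$ independent of the data, I would derive an energy inequality for
$$E(t)=\|(w,v)(t)\|^2_{L^2}+\|\sqrt{a}(w_x,v_x)(t)\|^2_{L^2}.$$
The derivation multiplies the equations by $w,v$ and by $-(aw_x)_x,-(av_x)_x$. It uses the Hardy--Poincaré inequality available under \textbf{H1} (since $K<1$) to absorb the drift and linear terms through $b\ge b_0$. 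It uses the structure of $F_i$ from \textbf{H2} for the remaining terms. The target inequality is of the form
$$E'(t)+c\,E(t)^{1+\delta}\le 0 \qquad \text{once } E(t)\ge \varepsilon_1^2 .$$
Integrating gives $E(t)\le (c\delta t)^{-1/\delta}$ whenever $E(t)\ge\varepsilon_1^2$, which is a bound independent of $E(0)$. This is the mechanism that yields a data-independent time. What I would try first is to extract this superlinear dissipation from the quadratic remainder in \textbf{H2}, combined with the degenerate weighted estimates. If that fails, the fallback is to use the bilinear control during this phase as a damping term, choosing $h=-M$ on $\omega$ so as to enlarge the dissipation. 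The price is that uniformity must then be recovered from a unique-continuation or observability argument on $\omega$.

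\textbf{Step 3 (conclusion).} Define $\widetilde{T}=1+T_0$, where $T_0=(c\delta)^{-1}\varepsilon_1^{-2\delta}$. Then for any $T>\widetilde{T}$ and any $(y_0,z_0)\in[H^1_a(0,1)]^2$, Step 2 gives $\|(w,v)(T-1)\|_{H^1_a}\le\varepsilon_1$, and Step 1 supplies $h$ on $[T-1,T]$. Concatenating $h=0$ on $(0,T-1)$ with this control yields $h\in L^2(\omega\times(0,T))$ such that (\ref{ct}) holds. The concatenated state is well posed in $[H^1_a]^2$ by the existence and regularity theory used for Theorem \ref{th1}.
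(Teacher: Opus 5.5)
Your Step 2 is where the argument fails, and it cannot be repaired as stated. Hypothesis \textbf{H2} only bounds the second-order Taylor remainder of $F_i$ from above and carries no sign. Bounded derivatives also force $F_i$ to grow at most linearly, so there is no superlinear dissipation to extract. Worse, the target inequality $E'+cE^{1+\delta}\le 0$ for $E\ge\varepsilon_1^2$ is false in general. Take $F_1=y-z$, $F_2=y+z$, which satisfy \textbf{H2} with $d_{21}\equiv 1$ and even \eqref{cond_for_decay}. With $h\equiv 0$ the difference system is then linear and homogeneous, so the solution issued from $R(w_0,v_0)$ is $R$ times the one issued from $(w_0,v_0)$. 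Hence $E(t)$ at any fixed $t>0$ is unbounded over the data, whereas your inequality would give $E(t)\le\max\{\varepsilon_1^2,(c\delta t)^{-1/\delta}\}$.

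The fallback $h=-M$ on $\omega$ does not help, and in fact no control can. For fixed $h$ the $y$-equation is linear in $y$, a multiplicative control preserves the sign of $y$, and on a connected component of $(0,1)\setminus\overline{\omega}$ (when this set is nonempty) the $y$-equation is uncontrolled. Take for instance $F_1\equiv 0$, $F_2=y$, and $y_0=R\phi$ with $\phi\ge 0$ not vanishing on such a component. Comparison on that component gives $y(x_0,T)\ge cR$ with $c>0$ independent of $h$, so $\widetilde{y}(\cdot,T)$ cannot be reached at a fixed time $T$ once $R$ is large. So a data-independent $\widetilde{T}$ is out of reach for this bilinear system. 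Note also that Step 2 never invokes a dissipativity hypothesis. Without one (e.g.\ $F_1=-My$ with $M$ large) uncontrolled solutions grow, so \textbf{H1}--\textbf{H3} alone do not even give a data-dependent decay.

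The paper's proof is exactly the route you set aside. Assuming \eqref{cond_for_decay}, Theorem~\ref{twpds} gives $E(t)\le C(w_{1,0},w_{2,0})e^{-\mu t}$ for the uncontrolled difference system. One then chooses $\widetilde{T}$ with $C(w_{1,0},w_{2,0})e^{-\mu\widetilde{T}}\le\varepsilon$ and applies Theorem~\ref{th1} from time $\widetilde{T}$ on. The constant, and hence $\widetilde{T}$, depends on the data, so what is actually established is the statement with $\widetilde{T}=\widetilde{T}(y_0,z_0)$. By the argument above, this is the only reading that can hold.

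Your two-phase structure is sound. Fixing the length of the final window, so that $\varepsilon_1$ does not depend on $T-\widetilde{T}$, is more careful than the paper, which applies Theorem~\ref{th1} on $[\widetilde{T},T]$ without addressing how $\varepsilon$ depends on that interval. To complete your argument, replace Step 2 by the exponential decay under \eqref{cond_for_decay}. Then set $T_0=\mu^{-1}\log\bigl(C(w_{1,0},w_{2,0})/\varepsilon_1^2\bigr)$ and take $\widetilde{T}=T_0+1$, accepting that it depends on $(y_0,z_0)$.
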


The strategy to prove Theorem \ref{th1} relies on an application of the \emph{Liusternik's Inverse Function Theorem} in Banach spaces; see \cite{Alekseev}. To this end, we will check that the assumptions of Liusternik's Theorem are satisfied and, consequently for any small initial data $(y_0 ,z_0)$, system (\ref{eq:PDE}) is solvable.

To prove Theorem \ref{th2}, we will show that for every initial data $(y_0 ,z_0) \in [H^{1}_{a}(0,1)]^2$, the solutions of the system (\ref{eq:PDE}) without control, that is $h\equiv 0$, have an exponential decay in time. In this way, for $\widetilde{T}$ large enough $(y(\cdot ,\widetilde{T}), z(\cdot ,\widetilde{T}))$ belongs to a small ball of $[H^{1}_{0}(0,1)]^2$ centered at $(0,0)$, where by Theorem \ref{th1}, the local exact controllability to trajectories holds. Thus, we get $(y(\cdot ,T), z(\cdot ,T))=(\widetilde{y}(\cdot ,T), \widetilde{z}(\cdot ,T))$ with an appropriate choice of control.

The rest of the paper is structured as follows. In Section \ref{sec:trajectory}, we give the details of the change of variables and prove a result of decay of solutions. In Section \ref{sec:linearized_system}, we consider and solve a null controllability problem for an associated linear parabolic system; this will be needed later to prove that the hypotheses of Liusternik's Theorem are fulfilled. Section \ref{sec:control for nonlinear system} deals with the proof of Theorem \ref{th1} and Theorem \ref{th2}. Finally, some additional comments are presented in Section \ref{sec:final_remarks}.

\section{Reformulation of the problem}
\label{sec:trajectory}
\qquad Let us introduce a nonempty open set $\omega_0 \subset \subset \omega$ and trajectories $\widetilde{y}=\widetilde{y}(x,t)$ and $\widetilde{z}=\widetilde{z} (x,t)$ solutions of (\ref{eq:PDE}), such that
$$
\vert \widetilde{y}\vert >C >0 \quad \text{in} \quad \omega_0 \times (0,T)\,.
$$

Note that, if we set $y=w_1+\widetilde{y},$ \ $z=w_2+\widetilde{z}$,  $y_0 =w_{1,0} +\widetilde{y}_{0}$ and $z_0 =w_{2,0} +\widetilde{z}_{0}$ , we get the system
\begin{equation}\label{nolineal}
    \begin{cases}
		w_{1,t}-b(t)\left({a}(x)w_{1,x}\right)_x-B_1(x,t)\sqrt{a}w_{1,x}+F_1\left(x,t,w_1+\widetilde{y},w_2+\widetilde{z}\right) \\
        \qquad -F_1\left(x,t,\widetilde{y},\widetilde{z}\right)={h}\cara_{_{{\omega}}}(w_1+\widetilde{y}), & \ \ \ \text{in} \ \ \ {Q},\\
        w_{2,t}-b(t)\left({a}(x)w_{2,x}\right)_x-B_2(x,t)\sqrt{a}w_{2,x}+F_2\left(x,t,w_1+\widetilde{y},w_2+\widetilde{z}\right) \\
        \qquad - F_2\left(x,t,\widetilde{y},\widetilde{z}\right)=0, & \ \ \ \text{in} \ \ \ {Q},\\
		w_1=0, \ \ w_2=0 & \ \ \ \text{on} \ \ \ {\Sigma},\\
		w_1(0)=y_0-\widetilde{y}_0=w_{1,0}, \ \ w_2(0)=z_0-\widetilde{z}_0=w_{2,0}, & \ \ \ \text{in} \ \ \ (0,1),
    \end{cases}
\end{equation}
where, for simplicity, we represent $Q:=(0,1)\times (0,T)$ and $\Sigma := \partial (0,1) \times (0,T)$.

Thus, the local exact controllability of the solution to (\ref{eq:PDE}) to $(\widetilde{y},\widetilde{z})$ is equivalent to the null controllability of the solution to (\ref{nolineal}).
The null controllability problem for (\ref{nolineal}) can be formulated as an inversion local method in the Banach Spaces $\mathsf{V}$ and $\mathsf{W}$. To this end, let us consider the mapping $\mathcal{A} : \mathsf{V} \to \mathsf{W}$, given by
$$
\mathcal{A} (w_1,w_2,h)=(\mathcal{A}_{1,1}(w_1,w_2,h),\mathcal{A}_{1,2}(w_1,w_2,h),w_1(\cdot,0),w_2(\cdot,0)), 
$$
where
\begin{align*}
\mathcal{A}_{1,1}(w_1,w_2,h)  & =w_{1,t}-b(t)\left({a}(x)w_{1,x}\right)_x-B_1(x,t)\sqrt{a}w_{1,x}+F_1\left(x,t,w_1+\widetilde{y},w_2+\widetilde{z}\right)\\
 & \quad -F_1\left(x,t,\widetilde{y},\widetilde{z}\right)-{h}\cara_{_{{\omega}}}(w_1+\widetilde{y}),\\
\mathcal{A}_{1,2}(w_1,w_2,h) &=w_{2,t}-b(t)\left({a}(x)w_{2,x}\right)_x-B_2(x,t)\sqrt{a}w_{2,x}+F_2\left(x,t,w_1+\widetilde{y},w_2+\widetilde{z}\right) \\
  & \quad  -F_2\left(x,t,\widetilde{y},\widetilde{z}\right).
\end{align*}

Later, our approach will be to locally invert this mapping, with the aim to ensure, for any small initial data $(w_{1,0},w_{2,0}) \in [H_{a}^1 (0,1)]^2$, the existence of triplets $(w_1,w_2,h) \in \mathsf{V}$ such that $\mathcal{A}(w_1,w_2,h) =(G_1 ,G_2 ,w_{1,0},w_{2,0})$ for $(G_1 ,G_2 ,w_{1,0},w_{2,0}) \in \mathsf{W}$.

To accomplish this goal, we will apply the following local inversion result in Banach spaces, which is a consequence of the so-called Liusternik-Graves Theorem:
\begin{teo}[Liusternik \cite{Alekseev}]\label{Liusternik} 
    Let $ \mathsf{V}$ and $ \mathsf{W}$ be Banach spaces and let $\mathcal{A}:B_{r}(0)\subset  \mathsf{V}\rightarrow  \mathsf{W}$ be a $\mathcal{C}^{1}$ mapping. Let us assume that $\mathcal{A}^{\prime}(0)$ is onto and let us set $\mathcal{A}(0)=\zeta_{0}$. Then, there exist $\delta >0$, a mapping $\widetilde{\mathcal{A}}: B_{\delta}(\zeta_{0})\subset  \mathsf{W}\rightarrow  \mathsf{V}$ and a constant $K>0$ such that
    \begin{equation*}
        \widetilde{\mathcal{A}}(z)\in B_{r}(0),\,\, \mathcal{A}(\widetilde{\mathcal{A}}(z))=z\,\, \text{and}\,\, \Vert \widetilde{\mathcal{A}}(z)\Vert_{ \mathsf{W}}\leq K\Vert z-\zeta_{0}\Vert_{ \mathsf{W}}\, \, \forall\, z\in B_{\delta}(\zeta_{0}).
    \end{equation*}
    In particular, $\widetilde{\mathcal{A}}$ is a local inverse-to-the-right of $\mathcal{A}$.
\end{teo}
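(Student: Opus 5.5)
This is the classical Liusternik–Graves local surjectivity theorem. The plan is to prove it by a Newton-type iteration that uses only the derivative at the origin, $\mathcal{A}'(0)$, together with the open mapping theorem. There is one small typo to note first: the estimate should read $\Vert \widetilde{\mathcal{A}}(z)\Vert_{\mathsf{V}}\leq K\Vert z-\zeta_0\Vert_{\mathsf{W}}$, since $\widetilde{\mathcal{A}}(z)\in\mathsf{V}$.

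\textbf{Step 1 (bounded right inverse in the nonlinear sense).} Since $\Lambda:=\mathcal{A}'(0)\in\mathcal{L}(\mathsf{V},\mathsf{W})$ is onto and both spaces are Banach, the open mapping theorem gives a constant $c>0$ with the following property. For every $w\in\mathsf{W}$ there is some $u\in\mathsf{V}$ with $\Lambda u=w$ and $\Vert u\Vert_{\mathsf{V}}\leq c\Vert w\Vert_{\mathsf{W}}$. We do not need this choice to be linear; an arbitrary selection suffices.

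\textbf{Step 2 (choice of radii).} Because $\mathcal{A}$ is $\mathcal{C}^1$, we may fix $\rho\in(0,r)$ such that
\[
\sup_{v\in B_\rho(0)}\Vert \mathcal{A}'(v)-\Lambda\Vert_{\mathcal{L}(\mathsf{V},\mathsf{W})}\leq \frac{1}{2c}.
\]
By the mean value inequality, for $v,v+u\in B_\rho(0)$ this gives
\[
\Vert \mathcal{A}(v+u)-\mathcal{A}(v)-\Lambda u\Vert_{\mathsf{W}}\leq \frac{1}{2c}\Vert u\Vert_{\mathsf{V}}.
\]
We then set $\delta:=\rho/(2c)$ and $K:=2c$.

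\textbf{Step 3 (iteration).} Given $z\in B_\delta(\zeta_0)$, start from $v_0=0$. At each step, let $u_n$ be the element given by Step 1 for $w=z-\mathcal{A}(v_n)$, so that $\Lambda u_n=z-\mathcal{A}(v_n)$ and $\Vert u_n\Vert\leq c\Vert z-\mathcal{A}(v_n)\Vert$. Then put $v_{n+1}=v_n+u_n$. Since $\Lambda u_n$ cancels the residual exactly, the new residual is
\[
z-\mathcal{A}(v_{n+1})=-\big[\mathcal{A}(v_n+u_n)-\mathcal{A}(v_n)-\Lambda u_n\big].
\]
As long as $v_n,v_{n+1}\in B_\rho(0)$, Step 2 then yields
\[
\Vert z-\mathcal{A}(v_{n+1})\Vert\leq \tfrac{1}{2c}\Vert u_n\Vert\leq \tfrac12\Vert z-\mathcal{A}(v_n)\Vert .
\]
By induction, $\Vert z-\mathcal{A}(v_n)\Vert\leq 2^{-n}\Vert z-\zeta_0\Vert$, and hence $\Vert u_n\Vert\leq c\,2^{-n}\Vert z-\zeta_0\Vert$.

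\textbf{Step 4 (passage to the limit).} Summing the bounds on $u_n$ gives $\Vert v_n\Vert\leq 2c\Vert z-\zeta_0\Vert<2c\delta=\rho$ for every $n$. The partial sums are therefore Cauchy and stay in $B_\rho(0)$. By completeness, $v_n\to v$ with $\Vert v\Vert\leq K\Vert z-\zeta_0\Vert$. Continuity of $\mathcal{A}$ and the decay of the residual give $\mathcal{A}(v)=z$. Setting $\widetilde{\mathcal{A}}(z):=v$ proves all three claims.

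The only delicate point is the bookkeeping in Step 3. One must check that every iterate, and every segment $[v_n,v_{n+1}]$, stays inside $B_\rho(0)$, so that the mean value estimate applies. This is handled by making the inductive hypothesis include both the residual bound and the bound $\Vert v_n\Vert\leq 2c(1-2^{-n})\Vert z-\zeta_0\Vert$. Since $B_\rho(0)$ is convex and both endpoints satisfy $\Vert v_n\Vert,\Vert v_{n+1}\Vert<\rho$, the whole segment lies in $B_\rho(0)$. No uniqueness or continuity of $\widetilde{\mathcal{A}}$ is claimed, so the non-linear selection in Step 1 causes no difficulty.
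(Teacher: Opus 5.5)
Your proof is correct. The paper gives no proof of its own and simply cites Alekseev--Tikhomirov--Fomin, where the result is established by essentially your argument: Banach's open mapping theorem supplies a bounded, generally nonlinear, right inverse of $\mathcal{A}'(0)$, which then drives a modified Newton iteration that uses only continuity of $\mathcal{A}'$ at $0$. You are also right that the estimate should read $\Vert\widetilde{\mathcal{A}}(z)\Vert_{\mathsf{V}}\leq K\Vert z-\zeta_0\Vert_{\mathsf{W}}$.
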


Once the inversion is performed, we take $(w_1,w_2,h)=\mathcal{A}^{-1}(G_1 ,G_2 ,w_{1,0},w_{2,0})$ and we see that the functions $y=y(x,t)$, $z=z(x,t)$ satisfy (\ref{ct}).

\subsection{Well-posedness and decay of solutions} \label{wpds}
\quad In this section, we will state well-posedness and decay of solutions for the nonlinear system (\ref{nolineal}) without control $h$. Such method is crucial to analyze global null controllability of (\ref{nolineal}). Precisely, let us consider the function $F_i \,\,(i=1,2)$ as in the assumption {\textbf {(H2)}}. Additionally, let us suppose that
\begin{equation}
    \label{cond_for_decay}
    r_1 (F_1\left(x,t,r_1+\widetilde{y},r_2+\widetilde{z}\right)-F_1 (x,t,\widetilde{y},\widetilde{z})) + r_2 (F_2\left(x,t,r_1+\widetilde{y},r_2+\widetilde{z}\right)-F_2 (x,t,\widetilde{y},\widetilde{z}))  \geq 0,      
\end{equation}
for any $(r_1,r_2) \in \mathbb{R} \times \mathbb{R}$.

For this, we consider the following result:
\begin{teo}\label{twpds}
    Let  $(w_{1,0},w_{2,0}) \in [H_{a}^1 (0,1)]^2$  and $h=0$. Under the assumptions on $F_i \,\,(i=1,2)$, the system (\ref{nolineal}) has a unique solution $(w_1,w_2)$ in the class
     $$(w_1,w_2) \in L^2(0,T;L^2(0,1))\cap L^{\infty}(0,T;L^2(0,1)).$$
    Moreover, supposing in addition that (\ref{cond_for_decay}) holds, 
    there exist positive constants $C$ and $\mu$, independent of $t$, such that
    \begin{equation} \label{cl2}
        \Vert w_1 (\cdot,t)\Vert^{2}_{L^2(0,1)}+\Vert w_2 (\cdot,t)\Vert^{2}_{L^2(0,1)}+\Vert \sqrt{a} \,w_{1,x} (\cdot,t)\Vert_{L^2(0,1)}^{2}+\Vert  \sqrt{a}\, w_{2,x}(\cdot,t)\Vert_{L^2(0,1)}^{2} \leq C e^{-\mu t}, \,\,\forall t \geq 0.
    \end{equation}
\end{teo}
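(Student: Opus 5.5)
Well-posedness comes from a Galerkin/semigroup argument, and the decay from an energy estimate of Lyapunov type. With $h=0$, write the nonlinearity as $G_i(x,t,w_1,w_2):=F_i(x,t,w_1+\widetilde{y},w_2+\widetilde{z})-F_i(x,t,\widetilde{y},\widetilde{z})$. By \textbf{(H2)} (bounded derivatives of $F_i$), $G_i$ is globally Lipschitz in $(w_1,w_2)$, uniformly in $(x,t)$, and $G_i(x,t,0,0)=0$.

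\textbf{Existence and uniqueness.} The operator $w\mapsto -b(t)(aw_x)_x$ with domain $H^2_a(0,1)$ generates a non-autonomous evolution family. Indeed, \textbf{(H3)} gives $b\in W^{1,\infty}$ with $b>b_0$, and the associated form $b(t)\int_0^1 a w_x\phi_x$ is coercive on $H^1_a(0,1)$. Coercivity follows from the weighted Hardy--Poincaré inequality $\int_0^1 w^2\le C_P\int_0^1 a w_x^2$, valid under \textbf{(H1)} with $K<1$. I will therefore do a Faedo--Galerkin approximation using the eigenfunctions of $-(aw_x)_x$. Multiplying each equation by $w_i$ and integrating gives
\begin{equation*}
\frac12\frac{d}{dt}\sum_i\|w_i\|^2 + b_0\sum_i\|\sqrt a w_{i,x}\|^2 \le \sum_i\|B_i\|_\infty\|\sqrt a w_{i,x}\|\|w_i\| + L\sum_i\|w_i\|^2 .
\end{equation*}
Young's inequality and Gronwall then give uniform bounds in $L^\infty(0,T;L^2)\cap L^2(0,T;H^1_a)$. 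Multiplying instead by $-(aw_{i,x})_x$ gives bounds in $L^\infty(0,T;H^1_a)\cap L^2(0,T;H^2_a)$ for $H^1_a$ data. The drift terms $B_i\sqrt a w_{i,x}$ are then absorbed via $\|B_i\sqrt a w_{i,x}\|\,\|(aw_{i,x})_x\|\le \varepsilon\|(aw_{i,x})_x\|^2+C_\varepsilon\|\sqrt a w_{i,x}\|^2$. With these bounds, compactness (Aubin--Lions, using the compact embedding $H^1_a\hookrightarrow L^2$) lets me pass to the limit in the Lipschitz nonlinearity. Uniqueness follows from the same $L^2$ estimate applied to the difference of two solutions, together with Gronwall.

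\textbf{Decay.} I would use the $L^2$ energy $E(t)=\sum_i\|w_i(t)\|^2$. Condition (\ref{cond_for_decay}) says precisely that $\sum_i G_i w_i\ge0$, so the nonlinear terms can be dropped, leaving
\begin{equation*}
\frac12 E'(t)+b_0\sum_i\|\sqrt a w_{i,x}\|^2\le \sum_i\int_0^1 |B_i|\,\sqrt a|w_{i,x}|\,|w_i|.
\end{equation*}
The main obstacle is this first-order drift term, which has no sign. My first attempt is to note that $\sqrt a\,w_x\,w=\tfrac12\sqrt a (w^2)_x$ and integrate by parts, using that $B_i\sqrt a$ is $C^1$ (stated for $i=1$, and I would assume the same for $i=2$). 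This turns the term into a zeroth-order term $-\tfrac12\int (B_i\sqrt a)_x w_i^2$; the boundary terms vanish by the Dirichlet conditions and $a(0)=0$. Then Hardy--Poincaré gives
\begin{equation*}
E'+2\Big(\tfrac{b_0}{C_P}-\tfrac12\|(B_i\sqrt a)_x\|_\infty\Big)E\le 0 .
\end{equation*}
This yields exponential decay provided the coefficient is positive, i.e.\ a smallness condition on the drift relative to $b_0/C_P$, which I would add implicitly as part of the hypotheses. An alternative is the crude Young bound, which requires $\|B_i\|_\infty^2 C_P<b_0^2$ or a similar condition.

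\textbf{From $L^2$ to $H^1_a$ decay.} Next I multiply by $-(aw_{i,x})_x$ to get
\begin{equation*}
\frac{d}{dt}\sum_i\|\sqrt a w_{i,x}\|^2+b_0\sum_i\|(aw_{i,x})_x\|^2\le C\Big(\sum_i\|\sqrt a w_{i,x}\|^2+E\Big),
\end{equation*}
where the $G_i$ terms are controlled by their Lipschitz bound and the drift by Young's inequality. I combine this with the dissipation $\int_t^{t+1}\sum_i\|\sqrt a w_{i,x}\|^2\le C E(t)\le Ce^{-\mu t}$ coming from the $L^2$ identity. A uniform Gronwall lemma on unit intervals, or the energy $E+\epsilon\sum_i\|\sqrt a w_{i,x}\|^2$ with $\epsilon$ small, then gives $\|\sqrt a w_{i,x}(t)\|^2\le Ce^{-\mu t}$ for $t\ge1$. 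On $[0,1]$ the bound follows from the finite-time estimate. Together these give (\ref{cl2}).
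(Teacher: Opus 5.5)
Your existence and uniqueness argument follows the paper's Galerkin scheme: eigenfunctions of $-(aw_x)_x$, the multipliers $w_i$ and $-(aw_{i,x})_x$, Aubin--Lions, and Gronwall on the difference of two solutions. Your bootstrap from $L^2$ to $H^1_a$ decay is also correct.

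\textbf{The gap is the $L^2$ decay.} After integrating the drift by parts you reach $E'+2\bigl(b_0/C_P-\frac12\max_i\|(B_i\sqrt{a})_x\|_\infty\bigr)E\le 0$. You then need the bracket to be positive and add this ``implicitly'' to the hypotheses. Nothing in \textbf{(H1)}--\textbf{(H3)} or (\ref{cond_for_decay}) provides it. Condition (\ref{cond_for_decay}) only gives a sign to the nonlinear terms, while the drift contributes $\pm\frac12\int_0^1(B_i\sqrt{a})_x w_i^2\,dx$, which has no sign. So you have proved (\ref{cl2}) only under an extra smallness condition on the drift, for instance $\frac12\max_i\|(B_i\sqrt a)_x\|_\infty<b_0/C_P$ or $B_i\equiv 0$. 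That is not the statement as written, and the condition should be stated explicitly rather than absorbed silently.

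The paper does not supply the missing idea either. Its proof asserts that combining (\ref{eq:d_dt_estI}) and (\ref{eq:d_dt_estIII}) with (\ref{cond_for_decay}) gives $\frac{d}{dt}(E+Y)+D\bigl(Y+\sum_i\|(aw_{i,x})_x\|^2\bigr)\le 0$, where $Y=\sum_i\|\sqrt{a}\,w_{i,x}\|^2$, and then concludes by Poincar\'e. That step silently drops two things:
\begin{itemize}
\item the sign-indefinite drift terms $(d_i\sqrt{a}\,u_{m,x},u_m)$ in (\ref{eq:d_dt_estI});
\item the positive right-hand side $D(\|u_m\|^2+\|v_m\|^2+\|\sqrt a u_{m,x}\|^2+\|\sqrt a v_{m,x}\|^2)$ of (\ref{eq:d_dt_estIII}), which includes the nonlinear terms tested against $-(aw_{i,x})_x$, on which (\ref{cond_for_decay}) says nothing.
\end{itemize}
Your treatment of the second point is the right repair and is more careful than the paper: first $L^2$ decay with dissipation, then uniform Gronwall or the functional $E+\epsilon Y$. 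The first point is exactly your smallness condition, and the paper's one-line combination tacitly needs it too.

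So with this energy method, yours or the paper's, (\ref{cl2}) is justified only under such a restriction. Removing it would need a new ingredient. The obvious candidate is an integrating-factor weight $\exp\bigl(\int_0^x B_i/(b\sqrt a)\,ds\bigr)$, which is bounded because $1/\sqrt a\in L^1$ under \textbf{(H1)}. But it creates terms involving the time derivative of the weight. It also puts different weights on $w_1$ and $w_2$, so (\ref{cond_for_decay}) no longer controls the coupling unless $B_1=B_2$.
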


\begin{proof}
    We observe that the non-linear system (\ref{nolineal}) has the form treated in Appendix A of \cite{GYL-Carleman-2025} when $h=0$. We sketch the proof here and obtain the exponential decay.
    
    Consider the system of semi-linear equations: 
    \begin{equation}\label{PDE_annexe}
    	\left\{\begin{aligned}
    		&u_t - b(t) \left(a(x) u_{x}\right)_{x} + d_1(x,t)\sqrt{a}u_x + \tilde F_1(x,t,u,v) = 
            H_1 &&\text{in} && Q,\\
    		&v_t - b(t) \left(a(x) v_{x}\right)_{x} + d_2(x,t)\sqrt{a}v_x + \tilde F_2(x,t,u,v) = H_2 &&\text{in}&& Q,\\
    		&u(0,t)=u(1,t)=v(0,t)=v(1,t)=0&&\text{on} && (0,T), \\
    		&u(\cdot,0) = u_0, \quad v(\cdot,0) = v_0 &&\text{in} && (0,1),
    	\end{aligned}
    	\right.
    \end{equation}
    where $d_1$, $d_2$ are bounded functions on $Q$, $0 < b_0 \leq b(t)$ bounded
    and $\tilde F_1$ and $\tilde F_2$ are globally Lipschitz in the third and fourth entries. To get system (\ref{nolineal}), we take $u=w_1$ and $v=w_2$ and, for $i=1,2$,
    $$
        \tilde F_i(x,t,u,v) = F_i\left(x,t,u+\widetilde{y},v+\widetilde{z}\right)-F_i (x,t,\widetilde{y},\widetilde{z}).
    $$

    Let $(\mathsf{w}_{i})_{i}^{\infty}$ be an orthonormal basis of $H^{1}_{a}(0,1)$ such that $-(a(x)\mathsf w_{i,x})_{x}=\lambda_{i}\mathsf w_{i}$. Thus,
    \begin{equation*}
    	-b(t)(a(x) \mathsf w_{i,x})_{x}=\lambda_{i}(t) \mathsf w_{i},    
    \end{equation*}
    where $\lambda_i(t)=b(t)\lambda_i$.
     
    Fix $m\in\mathbb{N}^{*}$. Due to Caratheodory's theorem, there exist absolutely continuous functions $g_{im}=g_{im}(t)$ and $h_{im}=h_{im}(t)$ with $i\in\{1,2,...,m\}$ such that 
    \begin{equation*}
        \begin{split}
        	&t\in[0,T]\mapsto u_{m}(t)=\sum_{i=1}^{m}g_{im}(t) \mathsf w_{i} \in H_{a}^{1}(0,1) \\
          	&t\in[0,T]\mapsto v_{m}(t)=\sum_{i=1}^{m}h_{im}(t) \mathsf w_{i} \in H_{a}^{1}(0,1),
        \end{split}
    \end{equation*}
    satisfy, for any $w,\hat{w}\in [\mathsf w_{1}, \mathsf w_{2},...,\mathsf w_{m}]$  and $(\cdot,\cdot)=(\cdot,\cdot)_{L^{2}}$, that
    \begin{equation}
    	\label{eq:galerkin_system}
    	\left\{\begin{aligned}
    		&(u_{m,t},w) - b(t)((a(x) u_{m,x})_x,w) + (d_1 \sqrt{a}  u_{m,x},w) + (\tilde F_1(x,t,u_m, v_{m}),w) = 
            (H_1,w)  &&\text{in}&& Q, \\
    		&(v_{m,t},\hat{w}) - b(t)((a(x) v_{m,x})_x,\hat{w}) + (d_2 \sqrt{a} v_{m,x},\hat w) + (\tilde F_2(x,t,u_m,v_m),\hat w)  = (H_2,\hat{w})   &&\text{in}&& Q,\\
    		&u_{m}(0,t)=u_{m}(1,t)=0, \ v_{m}(0,t) = v_{m}(1,t) = 0 &&\text{on}&& (0,T), \\
    		& u_{m}(\cdot,0)\to u_{0} &&\text{in }&& (0,1), \\
    		& v_{m}(\cdot,0)\to v_{0} &&\text{in}&& (0,1).
    	\end{aligned}
    	\right.
    \end{equation}
    Estimate I: Taking $w=u_{m}$ and $\hat{w}=v_{m}$, then
    \begin{equation}\label{eq:d_dt_estI}
    	\begin{split}
    		&\frac{1}{2}\frac{d}{dt}\left(\|u_{m}\|^{2}+\|v_{m}\|^{2}\right) + b(t)\left(\|\sqrt{a}u_{m,x}\|^{2}+\|\sqrt{a}v_{m,x}\|^2\right) + (d_1 \sqrt{a} u_{m,x},u_m) + (d_2 \sqrt{a}v_{m,x},v_m)\\
    		&\quad +(\tilde F_1(x,t,u_m,v_m),u_m) + (\tilde F_2(x,t,u_m,v_m),v_m) = 
            (H_1,u_m) +(H_2,v_m).
    	\end{split}
    \end{equation}
    Then, by standard estimates, integration in time and using Gronwall's inequality we deduce
    \begin{equation}\label{energyestimate1}
    	\begin{split}
    		&\|u_m\|_{L^{\infty}(0,T,L^{2}(0,1))}^{2}+\|v_{m}\|_{L^{\infty}(0,T,L^{2}(0,1))}^{2}+\|\sqrt{a}u_{m,x}\|_{L^{2}(0,T,L^{2}(0,1))}^{2}+\|\sqrt{a}v_{m,x}\|_{L^{2}(0,T,L^{2}(0,1))}^{2}\\ 
    		&\leq e^{C_{*} T} \left( 
            \sum_{i=1}^{2}\|H_i\|^{2}_{L^{2}((0,T),L^2(0,1))}+\|u_{0}\|_{H^{1}_{a}(0,1)}+\|v_{0}\|^{2}_{H^{1}_a(0,1)}\right) =: \mathcal{K}_1.
    	\end{split}
    \end{equation}

    Estimate II: Taking $w=u_{m,t}$ and $\hat{w}=v_{m,t}$ in \eqref{eq:galerkin_system} and using estimate I, we get
    \begin{equation*}
    	\begin{split}
    		&\|u_{m,t}\|^2_{L^2(0,T,L^2(0,1))} +  \|v_{m,t}\|^2_{L^2(0,T,L^2(0,1))} + \|\sqrt{a}u_{m,x}\|^2_{L^\infty(0,T,L^2(0,1))} + \|\sqrt{a}v_{m,x}\|^2_{L^\infty(0,T,L^2(0,1))} \\
    		&\leq e^{DT} \left[ \|\sqrt{a}u_{m,x}(0)\|^2_{L^2(0,1)} + \|\sqrt{a}v_{m,x}(0)\|^2_{L^2(0,1)}
    		+ \sum_{i=1}^2  \|H_i\|^2_{L^2(0,T,L^2(0,1))} + 2\mathcal{K}_1 T  \right] =:\mathcal{K}_2.
    	\end{split}
    \end{equation*}

    Estimate III: Taking $w=-(a(x)u_{m,x})_x$ and $\hat{w}=-(a(x) v_{m,x})_x$ in \eqref{eq:galerkin_system}, we get that there exists a constant $D>0$ such that 
    \begin{equation}\label{eq:d_dt_estIII}
    	\begin{split}
    		&\frac{d}{dt}\left( \|\sqrt{a} u_{m,x} \|^2 + \|\sqrt{a}v_{m,x}\|^2\right) + \|(a u_{m,x})_x\|^2 + \|(a v_{m,x})_x\|^2 \\
    		&\leq D \left( 
            \sum_{i=1}^2  \|H_{i}\|^2 + \|u_m\|^2 + \|v_m\|^2
    		+ \|\sqrt{a} u_{m,x} \|^2 + \|\sqrt{a} v_{m,x}\|^2 
    		\right).
    	\end{split}
    \end{equation}
    Integrating in $t$ on $[0,t]$, using Grönwall's inequality,
    we have
    \begin{equation*}
    	\begin{split}
    		\|\sqrt{a} u_{m,x}\|^2_{L^\infty(0,T,L^2(0,1))}  + \|\sqrt{a}v_{m,x}\|^2_{L^\infty(0,T,L^2(0,1))} +  \|(a u_{m,x})_x\|^2_{L^2(0,T,L^2(0,1))} + \|(a v_{m,x})_x\|^2_{L^2(0,T,L^2(0,1))} \\
    		\leq e^{DT} \left[ \|\sqrt{a} u_{m,x}(0) \|^2 + \|\sqrt{a} v_{m,x}(0)\|^2 + 
            \sum_{i=1}^2  \|H_{i}\|^2_{L^2(0,T,L^2(0,1)} + 4\mathcal{K}_1 \right] =: \mathcal{K}_3.
    	\end{split}
    \end{equation*}

    Since $\mathcal{K}_1$, $\mathcal{K}_2$ and $\mathcal{K}_3$ do not depend on $m$, the three estimates above imply that the sequences $(u_m)$ and $(v_m)$ are bounded in 
    $L^2(0,T,L^2(0,1)) \cap H^1(0,T,H^2_a(0,1))$. 

    Therefore, there exist subsequences $(u_{m_j})$, $(v_{m_j})$ such that
    $$
        u_{m_j} \rightharpoonup u,\qquad v_{m_j} \rightharpoonup v, \qquad \text{as } j\to \infty, 
    $$
    weakly in $L^2(0,T,L^2(0,1)) \cap H^1(0,T,H^2_a(0,1))$.
    In fact, since our problem is weakly-degenerate, the immersion $H^2_a(0,1)$ in $H^1_a(0,1)$ is compact. By the theorem of Aubin-Lions we get
    $$
        u_{m_j} \to u,\qquad v_{m_j} \to v, \qquad \text{as } j\to \infty, 
    $$
    strongly in $L^2(0,T;H^1_a(0,1))$. Using the continuity of $F_i$, at least for a subsequence, we can pass to the limit in $m$ in all the terms of the approximate system \eqref{eq:galerkin_system}. The uniqueness is proved by standard methods for nonlinear systems.

    To obtain the exponential decay, we take the control $h=0$ and the external forces $H_i=0$. Combining \eqref{eq:d_dt_estI} and \eqref{eq:d_dt_estIII} 
    and using
    \eqref{cond_for_decay}, we have, for some $D>0,$
    \begin{equation*}
        \begin{split}
            &\frac{d}{dt}\left(\|u_{m}\|^{2}+\|v_{m}\|^{2} + \|\sqrt{a} u_{m,x} \|^2 + \|\sqrt{a}v_{m,x}\|^2\right) + D \left( \|\sqrt{a}u_{m,x}\|^{2}+\|\sqrt{a}v_{m,x}\|^{2} \right. \\
            &\left. \quad + \|(a u_{m,x})_x\|^2 + \|(a v_{m,x})_x\|^2 \right) 
            \leq 0
        \end{split}
    \end{equation*}
    From Poincaré inequality,
    $$
        \lambda_1(t) \|u\|^2 \leq \|\sqrt{a}u_x\|^2 \qquad\text{and}\qquad \lambda_1(t) \|\sqrt{a}u_x\|^2 \leq \| (a u_x)_x \|^2,
    $$
    where $\lambda_1(t)$ is the first eigenvalue of the operator $T(w) = -b(t)(a(x)w_{x})_{x}$ with Dirichlet boundary conditions. Since $0<b_0<b(t)$ then,
    $0<\tilde\lambda_1<\lambda_1(t)$ where $\tilde\lambda_1$ is the first eigenvalue of $-b_0 (a(x)w_{x})_{x}$. 
    Thus,
    \begin{equation*}
        \begin{split}
            &\frac{d}{dt}\left(\|u_{m}\|^{2}+\|v_{m}\|^{2} + \|\sqrt{a} u_{m,x} \|^2 + \|\sqrt{a}v_{m,x}\|^2\right) \\ 
            &
            + D \tilde\lambda_1 \left( \|u_{m}\|^{2}+\|v_{m}\|^{2} + \|\sqrt{a} u_{m,x} \|^2 + \|\sqrt{a} v_{m,x}\|^2 \right) \leq 0
        \end{split}
    \end{equation*}
    This implies that
    $$
        \left(\|u_{m}\|^{2}+\|v_{m}\|^{2} + \|\sqrt{a} u_{m,x} \|^2 + \|\sqrt{a}v_{m,x}\|^2\right) \leq C(u_0,v_0) 
        e^{-D\tilde\lambda_1t}.
    $$  
\end{proof}

\section{Study of the linearized system.}
\label{sec:linearized_system}
\quad	We will consider the following linearized system at zero:
\begin{equation}\label{eq:linearized_system1}
    \begin{cases}
		w_{1,t}-b(t)\left({a}(x)w_{1,x}\right)_x-B_1 (x,t)\sqrt{a}w_{1,x}+d_{11}(x,t)w_1\\
        \qquad + d_{12}(x,t)w_2=\overline{h} \cara_{_{{\omega}_0}} + G_1, & \ \ \ \text{in} \ \ \ {Q},\\
        w_{2,t}-b(t)\left({a}(x)w_{2,x}\right)_x-B_2 (x,t)\sqrt{a}w_{2,x}+d_{21}(x,t)w_1\\
        \qquad + d_{22}(x,t)w_2=G_2, & \ \ \ \text{in} \ \ \ {Q},\\
		w_1=0, \ \ w_2=0 & \ \ \ \text{on} \ \ \ {\Sigma},\\
		w_1(0)=y_0-\widetilde{y}_0=w_{1,0}, \ \ w_2(0)=z_0-\widetilde{z}_0=w_{2,0} & \ \ \ \text{in} \ \ \ (0,1).
    \end{cases}
\end{equation}
where 
\begin{align*}
    d_{11}(x,t):=\partial_3 F_1 (x,t,\widetilde{y},\widetilde{z}), & \quad d_{12}(x,t):=\partial_4 F_1 (x,t,\widetilde{y},\widetilde{z}),\\
    d_{21}(x,t):=\partial_3 F_2 (x,t,\widetilde{y},\widetilde{z}) &\,\,\, \text{and} \,\,\, d_{22}(x,t):=\partial_4 F_2 (x,t,\widetilde{y},\widetilde{z}).
\end{align*}

In the equation (\ref{eq:linearized_system1}) we define the new control $\overline{h} $ such that
\begin{equation}\label{new control}
    \overline{h} \cara_{_{{\omega}_0}}:={h}\cara_{_{{\omega}}}\widetilde{y}, \quad \text{where} \quad \omega_0\subset\subset \omega.
\end{equation}

{\textbf{Remark:}} It should be mentioned that the linearized problem (\ref{eq:linearized_system1}) allows us to prove results involving  {the} regularity of the control function $\overline{h}$ and  {also} additional estimates for the state $(w_1 ,w_2)$. Such results are important to obtain the null controllability of the nonlinear problem (\ref{nolineal}) with the new control function in the form:
$$
h\cara_{\omega}(w_1+\widetilde{y}):= \overline{h}\cara_{\omega_0}\left(\frac{\ 1 \ }{\widetilde{y}} w_1 +1 \right).
$$ 

As usual, the controllability of (\ref{eq:linearized_system1}) is closely related to the properties of the associated adjoint state. In this case, the adjoint of (\ref{eq:linearized_system1}) is given by

\begin{equation}\label{linealadjunto1}
	\begin{cases}
		-\varphi_{1,t}-b(t)\left({a}(x)\varphi_{1,x}\right)_x+\left(B(x,t)\sqrt{a}\varphi_1\right)_x+d_{11}(x,t)\varphi_1\\
        \qquad +d_{21}(x,t)\varphi_2=H_1, & \ \ \ \text{in} \ \ \ {Q},\\
        -\varphi_{2,t}-b(t)\left({a}(x)\varphi_{2,x}\right)_x+\left(B(x,t)\sqrt{a}\varphi_2\right)_x+d_{12}(x,t)\varphi_1\\
        \qquad +d_{22}(x,t)\varphi_2=H_2, & \ \ \ \text{in} \ \ \ {Q},\\
		\varphi_1=0, \ \ \varphi_2=0 & \ \ \ \text{on} \ \ \ {\Sigma},\\
		\varphi_1(T)=\varphi_1^T, \ \ \varphi_2(T)=\varphi_2^T, & \ \ \ \text{in} \ \ \ (0,1),
	\end{cases}
\end{equation}
where $\varphi_{i}^T \in L^2(0,1)$ and $H_i \in L^2(Q)$ for $i=1,2$.

Next, we define several weight functions which will be useful in the sequel. The basic weight will be a function $\Psi : [0,1] \to \R$ of class $C^2$ such that
$$
    \Psi(x) = 
    \begin{cases}
        \int_0^x \frac{s}{a(s)} ds, \quad x \in [0,\alpha'), \\
        -\int_{\beta'}^x \frac{s}{a(s)} ds, \quad x \in [\beta',1],
    \end{cases}
$$
with $\omega'=(\alpha',\beta') \subset\subset \omega$.

Let us introduce the function $\sigma =\sigma (t)$ satisfying
$$
\left\{
\begin{array}{ll}
\sigma \in C^{\infty}([0,T])\,,\\
\sigma (t)\geq \frac{T^2}{4}\,, & t \in [0,T/2]\,, \\
\sigma (t)=t(T-t)\,, & t \in [T/2,T], \,\\
\sigma(0) >0\,.
\end{array}
\right.
$$
and define
    $$
        \tau(t) = \frac{1}{\sigma(t)}, \quad \zeta(x,t) = \tau(t) (e^{\lambda(|\Psi|_\infty + \Psi)}) \quad \text{and} \quad A(x,t)  = \tau(t) (e^{\lambda(|\Psi|_\infty + \Psi)}-e^{3\lambda|\Psi|_\infty}).
    $$
    
The following Carleman estimate was proved in \cite{GYL-Carleman-2025}
\begin{propo} 
		\label{prop:carleman}
		There exist positive constants $C$, $\lambda_0$ and $s_0$ such that, for any $s \geq s_0$, $\lambda \geq \lambda_0$ and any $\varphi_1^T\in L^2(Q)$, the corresponding solution $(\varphi_1,\varphi_2)$ of (\ref{linealadjunto1}) satisfies
		\begin{equation*}
		    \begin{split}
		        &\int_Q e^{2s A}((s\lambda) \zeta b^2 a (|\varphi_{1,x}|^2 + |\varphi_{2,x}|^2) + (s\lambda)^2 {\zeta}^2 b^2 (|\varphi_1|^2 + |\varphi_2|^2))dxdt \\
                &\qquad \leq C \left( \int_Q e^{2s A} s^4 \lambda^4 \zeta^4 (|H_1|^2+|H_2|^2)dxdt + \int_{\omega_0 \times (0,T)} e^{2s A} s^8 \lambda^8 \zeta^8 |\varphi_1|^2 dxdt\right).
		    \end{split}
		\end{equation*}
	\end{propo}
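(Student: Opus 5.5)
The plan is to obtain Proposition \ref{prop:carleman} by combining two one-equation Carleman estimates and then eliminating the unobserved component $\varphi_2$ through the coupling coefficient $d_{21}$, which by \textbf{H2} is bounded below on $\omega_0\times[0,T]$. The first ingredient is a Carleman estimate for a single degenerate, non-autonomous equation
$$-\varphi_t-b(t)(a\varphi_x)_x+(B\sqrt{a}\varphi)_x = f,$$
with weights built from $\Psi$ and $\tau=1/\sigma$. Near $x=0$ the weight $\int_0^x s/a$ is the standard Cannarsa--Martinez--Vancostenoble weight, so hypothesis \textbf{H1} (with $K<1$) together with a Hardy-type inequality controls the terms generated by the degeneracy. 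Near $x=1$, where the problem is non-degenerate, $\Psi$ is a classical Fursikov--Imanuvilov weight. The two regions are glued with a cutoff supported in $\omega'$. The time dependence of $b$ only adds terms involving $b'$, which is bounded because $b\in W^{1,\infty}$. Since $\sigma(0)>0$, the weight is singular only at $t=T$; the resulting lack of control near $t=0$ is repaired with a standard energy estimate on $(0,T/2)$. The convection term $(B\sqrt a\varphi)_x$ and the zero-order terms are moved to the right-hand side and absorbed for $s,\lambda$ large, using the fact that the left-hand side contains $s\lambda\zeta a|\varphi_x|^2$ and $(s\lambda)^2\zeta^2|\varphi|^2$.

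Applying this estimate to each equation, with the local observation on a set $\omega_1$ satisfying $\omega_0\subset\subset\omega_1\subset\subset\omega'$ (or directly on $\omega_0$ after shrinking), gives a bound for the sum of the two left-hand sides. The right-hand side is then bounded by the weighted $L^2$ norms of $H_1,H_2$ plus local terms $\int_{\omega_0\times(0,T)}e^{2sA}(s\lambda\zeta)^3(|\varphi_1|^2+|\varphi_2|^2)$. Absorbing the cross couplings $d_{12}\varphi_1$ and $d_{21}\varphi_2$ requires $s\geq s_0$.

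The remaining task is to remove the local term in $\varphi_2$, and this is the main obstacle. I would introduce a cutoff $\xi\in C_c^\infty(\omega)$ with $\xi\equiv1$ on the smaller observation set, and use the first adjoint equation to write
$$d_{21}\varphi_2=H_1+\varphi_{1,t}+b(a\varphi_{1,x})_x-(B\sqrt a\varphi_1)_x-d_{11}\varphi_1 .$$
Then I would test against $\xi e^{2sA}(s\lambda\zeta)^3\varphi_2$ and integrate by parts in $t$ and $x$, moving all derivatives onto $\varphi_2$ and the weight, and using the second equation to replace $\varphi_{2,t}$ and $(a\varphi_{2,x})_x$. Each derivative of the weight costs a factor $s\lambda\zeta$, and the cost of two derivatives on $\varphi_2$ is compensated by Young's inequality with a small parameter $\varepsilon$. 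This yields
$$\varepsilon\cdot(\text{LHS}) + C_\varepsilon\int_{\omega_0\times(0,T)}e^{2sA}(s\lambda\zeta)^8|\varphi_1|^2$$
together with weighted $H$-terms. These $H$-terms produce exactly the powers $s^4\lambda^4\zeta^4$ that appear in the statement. On $\omega$ the function $a$ is bounded below, so the degeneracy plays no role in this step. The delicate bookkeeping is making the powers of $s\lambda\zeta$ and the factors of $b$ match, and ensuring the weight $A$ (with the $-e^{3\lambda|\Psi|_\infty}$ shift) keeps $|\partial_t A|\le C\tau^2$. After choosing $\varepsilon$ small and $s_0,\lambda_0$ large, the local $\varphi_2$ term is absorbed and the stated inequality follows.
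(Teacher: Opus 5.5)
Your overall architecture is the standard one: a one-equation degenerate Carleman estimate for each component, summation, then removal of the local $\varphi_2$ term through $d_{21}$. The paper gives no argument of its own; it imports the estimate from the authors' earlier work on the linear coupled system. However, the step you identify as the main obstacle does not work as you describe it.

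Test the first adjoint equation against $\theta\varphi_2$, with $\theta=\xi e^{2sA}(s\lambda\zeta)^3$, and move all derivatives off $\varphi_1$. This leaves the principal part $\theta\bigl(-\varphi_{2,t}+b(a\varphi_{2,x})_x\bigr)$ acting on $\varphi_2$. The second adjoint equation only supplies $-\varphi_{2,t}-b(a\varphi_{2,x})_x$, with the opposite sign on the diffusion. So you cannot replace both $\varphi_{2,t}$ and $(a\varphi_{2,x})_x$, and the residual
\[
2\iint_Q \theta\, b\, \varphi_1 \,(a\varphi_{2,x})_x \,dx\,dt \qquad \Bigl(\text{equivalently } -2\iint_Q \theta\,\varphi_1\,\varphi_{2,t}\,dx\,dt\Bigr)
\]
always survives. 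Neither the left-hand side of the statement nor the one-equation estimate you describe contains a second-order quantity into which Young's inequality could absorb it. You need one more ingredient, and there are two ways to supply it:
\begin{itemize}
\item[(a)] Prove the one-equation estimate with the extra term $\iint_Q e^{2sA}(s\zeta)^{-1}\bigl(|\varphi_t|^2+|(a\varphi_x)_x|^2\bigr)$ on the left. The residual then costs $\varepsilon(s\zeta)^{-1}|(a\varphi_{2,x})_x|^2+C_\varepsilon(s\lambda\zeta)^7|\varphi_1|^2$.
\item[(b)] Integrate the residual by parts once more, which produces the local cross term $\iint_Q\theta ab\,\varphi_{1,x}\varphi_{2,x}$. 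Then prove a Caccioppoli-type bound for $\iint_Q\tilde\xi e^{2sA}(s\lambda\zeta)^5 a|\varphi_{1,x}|^2$ by testing the first equation with $\tilde\xi e^{2sA}(s\lambda\zeta)^5\varphi_1$, with $\tilde\xi$ supported on a slightly larger set still inside $\omega_0$. In that bound the coupling $d_{21}\varphi_2$ reappears. It is absorbed as $\varepsilon(s\lambda\zeta)^2|\varphi_2|^2+C_\varepsilon(s\lambda\zeta)^8|\varphi_1|^2$, consistent with the power $8$ in the statement.
\end{itemize}

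Two set-up points also need correcting.

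First, $d_{21}$ is bounded below only on $\omega_0$. So the cutoff must lie in $C_c^\infty(\omega_0)$, and the one-equation observation set must be some $\omega_2\subset\subset\omega_0$ outside of which $\Psi'\neq 0$. With your choices $\omega_0\subset\subset\omega_1$ and $\xi\in C_c^\infty(\omega)$, $d_{21}$ may vanish or change sign on $\mathrm{supp}\,\xi\setminus\omega_0$. Then $\iint_Q\xi e^{2sA}(s\lambda\zeta)^3d_{21}|\varphi_2|^2$ no longer dominates the local $\varphi_2$ term, and the final observation would sit on $\omega$ rather than $\omega_0$.

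Second, because $\sigma(0)>0$ the weight does not vanish at $t=0$. The time integration by parts in the elimination step therefore leaves $\int_0^1\xi e^{2sA(x,0)}(s\lambda\zeta(x,0))^3\varphi_1(x,0)\varphi_2(x,0)\,dx$, which the left-hand side does not control. Carry out the whole coupling argument with weights built from $1/(t(T-t))$, which vanish at both ends. Only at the very end pass to $\sigma$ by energy estimates on $(0,T/2)$.
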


As a corollary, we get the following observability inequality.
\begin{coro} 
    \label{cor:observability}
    There exist positive constants $C$, $\lambda_0$ and $s_0$ such that, for any $s \geq s_0$, $\lambda \geq \lambda_0$ and any $\varphi_1^T\in L^2(Q)$, the corresponding solution $(\varphi_1,\varphi_2)$ of (\ref{linealadjunto1}) with $H_1 = H_2 = 0$, satisfies
    \begin{equation}\label{Observability1}
        \|\varphi_1(0)\|^2_{L^2(0,1)} + \|\varphi_2(0)\|^2_{L^2(0,1)} \leq C \int_{\omega_0 \times (0,T)} e^{2s A} s^8 \lambda^8 \zeta^8 |\varphi_1|^2dxdt.
    \end{equation}	
\end{coro}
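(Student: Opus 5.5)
The observability inequality (\ref{Observability1}) will follow from Proposition \ref{prop:carleman} combined with a backward energy estimate for the adjoint system (\ref{linealadjunto1}) with $H_1=H_2=0$. The Carleman estimate controls the weighted $L^2$ norm of $(\varphi_1,\varphi_2)$ on $Q$, and the energy estimate moves this information back to $t=0$.

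\textbf{Energy estimate.} Multiply the first equation of (\ref{linealadjunto1}) by $\varphi_1$ and the second by $\varphi_2$, integrate over $(0,1)$ and add. The diffusion term gives $b(t)\big(\|\sqrt a\varphi_{1,x}\|^2+\|\sqrt a\varphi_{2,x}\|^2\big)$, which is nonnegative since $b\geq b_0$. The first-order term is integrated by parts:
\[
\int_0^1 (B\sqrt a\varphi_i)_x\varphi_i\,dx=-\int_0^1 B\sqrt a\,\varphi_i\varphi_{i,x}\,dx .
\]
By Young's inequality this is absorbed by half of the diffusion term, leaving $C\|\varphi_i\|^2$ with $C$ depending on $\|B\|_\infty$ and $b_0$. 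The zero-order terms are bounded by $C(\|\varphi_1\|^2+\|\varphi_2\|^2)$ because $d_{ij}\in L^\infty$. This yields
\[
-\frac{d}{dt}\big(\|\varphi_1(t)\|^2+\|\varphi_2(t)\|^2\big)\leq C\big(\|\varphi_1(t)\|^2+\|\varphi_2(t)\|^2\big).
\]
Gronwall's inequality then gives $\|\varphi_1(0)\|^2+\|\varphi_2(0)\|^2\leq e^{Ct}\big(\|\varphi_1(t)\|^2+\|\varphi_2(t)\|^2\big)$ for every $t\in[0,T]$. Averaging over $t\in[T/4,3T/4]$ gives
\[
\|\varphi_1(0)\|^2+\|\varphi_2(0)\|^2\leq C_T\int_{T/4}^{3T/4}\!\!\int_0^1(|\varphi_1|^2+|\varphi_2|^2)\,dx\,dt .
\]

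\textbf{Lower bound for the weights.} On $[T/4,3T/4]\times[0,1]$, $\sigma$ is bounded above and below by positive constants: it is at least $\min(T^2/4,\,3T^2/16)$ and continuous on $[0,T]$. Hence $\tau$ and $\zeta=\tau e^{\lambda(|\Psi|_\infty+\Psi)}$ are bounded below by a positive constant, since $e^{\lambda(|\Psi|_\infty+\Psi)}\geq1$. Also $A\geq-\tau e^{3\lambda|\Psi|_\infty}$, which is bounded below there, so $e^{2sA}$ is bounded below by a positive constant depending on $s,\lambda,T$. With $b\geq b_0$, this gives
\[
\int_{T/4}^{3T/4}\!\!\int_0^1(|\varphi_1|^2+|\varphi_2|^2)\leq C\int_Q e^{2sA}(s\lambda)^2\zeta^2b^2(|\varphi_1|^2+|\varphi_2|^2).
\]
The first-derivative term in the Carleman estimate is nonnegative and can be dropped.

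\textbf{Conclusion.} Applying Proposition \ref{prop:carleman} with $H_1=H_2=0$, for fixed $s\geq s_0$ and $\lambda\geq\lambda_0$, bounds the right-hand side by $C\int_{\omega_0\times(0,T)}e^{2sA}s^8\lambda^8\zeta^8|\varphi_1|^2$. Chaining the three inequalities gives (\ref{Observability1}) for that fixed $(s,\lambda)$.

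The main technical step is the energy estimate. The first-order term $(B\sqrt a\varphi_i)_x$ has to be handled through integration by parts, which needs the boundary term $B\sqrt a\,\varphi_i^2$ to vanish at $x=0,1$. At $x=1$ this follows from $\varphi_i=0$. At $x=0$ it follows from $a(0)=0$ together with the boundedness of $\varphi_i$ on $H^1_a$ in the weakly degenerate case $K<1$. For rough data one would first argue with regular terminal data and conclude by density. The constant $C$ in (\ref{Observability1}) depends on $s$ and $\lambda$; this is harmless here because only one fixed pair is needed later.
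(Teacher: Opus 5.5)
Your proposal is correct and follows the standard route that the paper leaves implicit: the paper states this result as an immediate corollary of Proposition~\ref{prop:carleman} and gives no proof. That route is a backward $L^2$ energy estimate with Gronwall, then averaging over $[T/4,3T/4]$ (where $e^{2sA}\zeta^2 b^2$ is bounded below), then the Carleman estimate with $H_1=H_2=0$. Your remark that $C$ depends on $(s,\lambda)$ is in fact unavoidable, not merely harmless: since $A\le -c_\lambda\tau<0$, the right-hand side of (\ref{Observability1}) tends to zero as $s\to\infty$, so the inequality can only hold with $C=C(s,\lambda)$ or for one fixed pair $(s,\lambda)$.
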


In the sequel, let us set weights that depend only on $t$, in the following sense:
\begin{equation*}
    \begin{cases}
        A^*(t) = \displaystyle\max_{x \in (0,1)} A(x,t), \qquad \hat A(t) = \displaystyle\min_{x \in (0,1)} A(x,t), \\
        \zeta^*(t) = \displaystyle\max_{x \in (0,1)} \zeta(x,t), \qquad \hat \zeta(t) = \displaystyle\min_{x \in (0,1)} \zeta(x,t).
    \end{cases}
\end{equation*}
Observe that $A^*(t) < 0$, $\hat A(t) < 0$ and that $\zeta^*(t) / \hat \zeta(t)$ does not depend on $t$ and is equal to some constant $\zeta_0 \in \R$. 

Thus, Proposition \ref{prop:carleman} and Corollary \ref{cor:observability}  imply the following corollary where the weights depend only on $t$.

\begin{coro} 
		\label{cor:carleman_pesos_t}
		There exist positive constants $C$, $\lambda_0$ and $s_0$ such that, for any $s \geq s_0$, $\lambda \geq \lambda_0$ and any $\varphi_1^T \in L^2(Q)$, the corresponding solution $(\varphi_1,\varphi_2)$ of (\ref{linealadjunto1}) satisfies
		\begin{equation}
			\label{pesos_t_Carleman for eq:adjoint_optimality_system}
            \begin{split}
                &\|\varphi_1(0)\|^2_{L^2(0,1)} + 
                \int_Q e^{2s \hat A} \left[(s\lambda) \hat \zeta b^2 a (|\varphi_{1,x}|^2 + |\varphi_{2,x}|^2) + (s\lambda)^2 {\hat \zeta}^2 b^2 (|\varphi_1|^2+|\varphi_2|^2) \right] dx dt \\
                &\qquad \leq C \left( \int_Q e^{2s A^*} (\zeta^*)^4 (|H_1|^2+|H_2|^2)dxdt + \int_{\omega \times (0,T)} e^{2s A^*} (\zeta^*)^8 |\varphi_1|^2 dxdt \right).    
            \end{split}
		\end{equation}
	\end{coro}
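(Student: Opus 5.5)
The plan is to derive Corollary \ref{cor:carleman_pesos_t} directly from Proposition \ref{prop:carleman}. On the left-hand side we replace the $x$-dependent weights by their minima in $x$, and on the right-hand side by their maxima. To recover $\|\varphi_1(0)\|^2_{L^2(0,1)}$ we add an energy estimate on $(0,T/2)$, where the weights are bounded above and below because $\sigma$ does not vanish there.

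\emph{Step 1: comparison of weights.} Since $s>0$, $A(x,t)\ge \hat A(t)$ gives $e^{2s\hat A}\le e^{2sA}$, and $\hat\zeta\le \zeta$. Substituting these into the left-hand side of Proposition \ref{prop:carleman} yields the integral in (\ref{pesos_t_Carleman for eq:adjoint_optimality_system}). On the right-hand side, $e^{2sA}\le e^{2sA^*}$ and $\zeta\le\zeta^*$. Once $s,\lambda$ are fixed, the powers $s^4\lambda^4$ and $s^8\lambda^8$ are absorbed into $C$, and $\omega_0\subset\omega$ lets us enlarge the observation domain. At this point only the term $\|\varphi_1(0)\|^2$ is missing.

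\emph{Step 2: energy estimate backward in time.} We multiply the adjoint system (\ref{linealadjunto1}) by $\varphi_1,\varphi_2$ and integrate over $(0,1)$. We integrate $(B\sqrt a\varphi_i)_x\varphi_i$ by parts to get $-\int B\sqrt a\varphi_i\varphi_{i,x}$, then use Young's inequality against the diffusion term $b\|\sqrt a\varphi_{i,x}\|^2$ with $b\ge b_0$. The bounded coupling terms $d_{ij}$ are handled the same way. This gives
\[
-\frac{d}{dt}\big(\|\varphi_1\|^2+\|\varphi_2\|^2\big)\le C\big(\|\varphi_1\|^2+\|\varphi_2\|^2+\|H_1\|^2+\|H_2\|^2\big).
\]
Applying Gronwall between $0$ and $t\in[T/4,3T/4]$ and averaging over $t$, we obtain
\[
\|\varphi_1(0)\|^2\le C\int_{T/4}^{3T/4}\!\!\int_0^1(|\varphi_1|^2+|\varphi_2|^2)\,dx\,dt+C\int_0^{3T/4}\!\!\int_0^1(|H_1|^2+|H_2|^2)\,dx\,dt .
\]
(Alternatively, we could follow the route the paper suggests through Corollary \ref{cor:observability}, but that corollary requires $H_i=0$, so the energy argument above is the natural choice.)

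\emph{Step 3: bounding the two pieces by weighted integrals.} On $[T/4,3T/4]$, $\sigma$ is bounded above and below by positive constants. Hence $\hat A$ and $\hat\zeta$ are bounded, and $e^{2s\hat A}(s\lambda)^2\hat\zeta^2b^2\ge c>0$, so the first integral is dominated by the left-hand Carleman integral. For the $H$-term on $(0,3T/4)$, $\sigma$ is bounded above and bounded below by $\min(\sigma(0),\cdot)>0$ thanks to $\sigma\ge T^2/4$ on $[0,T/2]$. Therefore $e^{2sA^*}(\zeta^*)^4$ is bounded below by a positive constant there, and $\int_0^{3T/4}\|H\|^2$ is controlled by the weighted $H$-term. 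Adding the Step 1 inequality and combining it with Steps 2 and 3 gives (\ref{pesos_t_Carleman for eq:adjoint_optimality_system}).

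The main obstacle is the energy estimate in Step 2 for the degenerate drift term. We need the integration by parts $\int (B\sqrt a\varphi)_x\varphi=-\int B\sqrt a\varphi\varphi_x$ to be justified, with vanishing boundary terms at $x=0$. This holds because $\varphi\in H^1_a$ vanishes at $0$, $\sqrt a$ is bounded, and $B\sqrt a$ is $C^1$ by (H3). A density argument using regular final data handles the regularity needed to make this rigorous.
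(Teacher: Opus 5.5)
Your proof is correct. Step 1 (replacing $A,\zeta$ by $\hat A,\hat\zeta$ on the left and by $A^*,\zeta^*$ on the right, then enlarging $\omega_0$ to $\omega$) is exactly the comparison the paper has in mind.

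Where you differ from the paper is the term $\|\varphi_1(0)\|^2_{L^2(0,1)}$. The paper only says that Proposition~\ref{prop:carleman} together with Corollary~\ref{cor:observability} gives the result. However, Corollary~\ref{cor:observability} is stated only for $H_1=H_2=0$, so as written it does not cover the source terms. Splitting $\varphi$ into a homogeneous part and a part driven by $H$ does not immediately reduce the problem to that corollary either. The reason is that the right-hand weight $e^{2sA^*}(\zeta^*)^4$ vanishes at $t=T$, so it does not control $\|H_i\|_{L^2(Q)}$.

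Your argument supplies the missing step in a self-contained way. You use the backward energy estimate with sources, averaged over $[T/4,3T/4]$. You combine it with the fact that $\sigma$ stays bounded away from $0$ on $[0,3T/4]$ (by $\min\{T^2/4,3T^2/16\}$), which is precisely why $\sigma$ is chosen non-vanishing near $t=0$. The paper's route is shorter, but literally valid only in the homogeneous case.

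In both routes the constant $C$ depends on $s$ and $\lambda$. The factors $s^4\lambda^4$ and $s^8\lambda^8$ are dropped on the right, and your lower bound for $e^{2s\hat A}(s\lambda)^2\hat\zeta^2 b^2$ on $[T/4,3T/4]$ is exponentially small in $s$. So the corollary should be read for fixed $s\ge s_0$, $\lambda\ge\lambda_0$, which is how it is used when the weights $\rho_j$ are introduced.
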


\subsection{A null controllability result for the linear system}
Our aim is to prove a global null controllability result for the linear system (\ref{eq:linearized_system1}). To this end, we need to define the weight functions:
$$
\rho_j :=e^{-sA^*}  (\zeta^*)^{-j} \quad \text{with the norm} \quad \Vert \rho_j w\Vert_{L^2 (Q)}:=\Bigl(\int_Q \rho_{j}^2\vert w\vert^2 \Bigr)^{1/2},\,\,j \in \mathbb{N} \cup \{ 0\}
$$
which satisfy
\begin{equation}\label{eq:compara_rhos}
     \rho_{5} \leq C\rho_{4}\leq C \rho_3 \leq C\rho_{2}\leq C \rho_{1}, \quad \text{and} \quad\rho_{3}^{2}=\rho_{4}\rho_{2} .   
\end{equation}

The next result establishes the null controllability of system (\ref{eq:linearized_system1}), which is a very well known result. The estimates in this result are also known in the literature and have been used with various purposes, although we believe that is useful to briefly recall them here, as well as their proofs.

\begin{teo}
    \label{prop:linear_control}
		If $(w_{1}^0 ,w_{2}^0)\in [L^{2}(0,1)]^2$, $\rho_2 G_i \in L^2(Q),\,\, (i=1,2)$, then there exists a control $\overline{h}\in L^{2}(\omega \times (0,T))$ with associated states $w_1, w_2 \in C^{0}([0,T];L^{2}(0,1))\cap L^{2}(0,T;H^{1}_{a})$, solutions of system \eqref{eq:linearized_system1}, such that
		\begin{equation}\label{estimate for solution}
			\int_Q \rho_2^2 |w_1|^2 dxdt + \int_Q \rho_2^2 |w_2|^2 dxdt + \int_{\omega \times (0,T)} \rho_4^2 |\overline{h}|^2dxdt \leq
			C \kappa_{0}(G_{1},G_{2},w_{1,0},w_{2,0}),   
		\end{equation}
		where 
                $$\kappa_{0}(G_{1},G_{2},w_{1,0},w_{2,0})= \Vert \rho_2 G_1 \Vert^2_{L^2(Q)} + \Vert\rho_2 G_2 \Vert^2_{L^2(Q)} + \Vert w_{1,0}\Vert^2_{L_2(0,1)} + \Vert w_{2,0} \Vert^2_{L_2(0,1)}\,.$$
        
        In particular, $w_1(x,T)=0$ and $w_2(x,T)=0$, for all $x\in [0,1]$.
\end{teo}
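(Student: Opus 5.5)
We follow the classical Fursikov--Imanuvilov duality scheme, using the Carleman inequality with time-only weights, Corollary \ref{cor:carleman_pesos_t}, as the basic tool. We define the space $P_0$ of pairs $(\varphi_1,\varphi_2)\in [C^\infty(\overline Q)]^2$ that satisfy the homogeneous boundary conditions in the weighted-space sense. On $P_0$ we introduce the symmetric bilinear form
\begin{equation*}
    a\bigl((\hat\varphi_1,\hat\varphi_2),(\varphi_1,\varphi_2)\bigr)=\int_Q \rho_0^2\,\bigl(L^*_1\hat\varphi\, L^*_1\varphi + L^*_2\hat\varphi\, L^*_2\varphi\bigr)\,dxdt+\int_{\omega_0\times(0,T)}\rho_4^{-2}\,\hat\varphi_1\varphi_1\,dxdt .
\end{equation*}
Here $L^*_i\varphi$ denotes the left-hand side of the $i$-th equation of (\ref{linealadjunto1}), so that $L^*_i\varphi=H_i$. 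The weights are chosen so that Corollary \ref{cor:carleman_pesos_t} applies with $H_i=L_i^*\varphi$. The right-hand side of (\ref{pesos_t_Carleman for eq:adjoint_optimality_system}) then becomes comparable to $a(\varphi,\varphi)$, which requires $\rho_0^2\gtrsim e^{2sA^*}(\zeta^*)^4$ and $\rho_4^{-2}= e^{2sA^*}(\zeta^*)^8$; the exponent bookkeeping has to be adjusted to the definition of $\rho_j$. As a result, $a(\cdot,\cdot)$ is a scalar product on $P_0$ by unique continuation, and we let $P$ be the completion. The Carleman inequality, with $s,\lambda$ fixed, gives the key estimate
\begin{equation*}
    \|\varphi_1(0)\|^2+\|\varphi_2(0)\|^2+\int_Q \rho_2^{-2}\,(|\varphi_1|^2+|\varphi_2|^2)\,dxdt\le C\,a(\varphi,\varphi).
\end{equation*}
Here the weight $e^{2s\hat A}\hat\zeta^2$ on the left must dominate the weight $\rho_2^{-2}$. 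This is where we need $\zeta^*/\hat\zeta=\zeta_0$ to be constant, and we may have to replace $A^*$ by $\hat A$ in the definition of $\rho$. The constant ratio also lets us compare $e^{s\hat A}$ with $e^{sA^*}$ at the cost of changing $s$, as is standard via $\lambda$ large.

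Next we consider the linear form $\ell(\varphi)=\int_Q (G_1\varphi_1+G_2\varphi_2)\,dxdt+\int_0^1 (w_{1,0}\varphi_1(0)+w_{2,0}\varphi_2(0))\,dx$. By Cauchy--Schwarz, pairing $\rho_2G_i$ against $\rho_2^{-1}\varphi_i$, and by the estimate above, we get $|\ell(\varphi)|\le C\kappa_0^{1/2}a(\varphi,\varphi)^{1/2}$. Lax--Milgram then provides a unique $\hat\varphi\in P$ with $a(\hat\varphi,\varphi)=\ell(\varphi)$ for all $\varphi\in P$, together with $a(\hat\varphi,\hat\varphi)\le C\kappa_0$. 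We set
\begin{equation*}
    w_i=\rho_0^2\,L_i^*\hat\varphi \quad (i=1,2),\qquad \overline h=-\rho_4^{-2}\,\hat\varphi_1\,\cara_{\omega_0}.
\end{equation*}
Then $\int_Q\rho_0^{-2}(|w_1|^2+|w_2|^2)+\int_{\omega_0\times(0,T)}\rho_4^2|\overline h|^2=a(\hat\varphi,\hat\varphi)\le C\kappa_0$, and the estimate (\ref{estimate for solution}) follows because $\rho_2\le C\rho_0^{-1}$ in the adopted normalization; the signs and sides of the weights have to be tracked carefully here.

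It remains to identify $(w_1,w_2)$ as the state associated with $\overline h$ and to show that $w_i(T)=0$. The variational identity says that $(w_1,w_2)$ is a solution by transposition of (\ref{eq:linearized_system1}) with data $(G_1,G_2,w_{1,0},w_{2,0})$ and control $\overline h$, and that it satisfies $\int_0^1 w_i(T)\varphi_i(T)\,dx=0$ for all test functions. Uniqueness of the transposition solution shows that $(w_1,w_2)$ coincides with the weak solution in $C^0([0,T];L^2)\cap L^2(0,T;H^1_a)$; this holds because the right-hand side $\overline h\cara_{\omega_0}+G_i$ lies in $L^2(Q)$, since $\rho_4,\rho_2$ are bounded below. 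The identification, combined with the arbitrariness of $\varphi_i(T)$, gives $w_1(T)=w_2(T)=0$.

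The main obstacle is the compatibility of the weights. The Carleman estimate of Corollary \ref{cor:carleman_pesos_t} mixes $\hat A,\hat\zeta$ on the left with $A^*,\zeta^*$ on the right. So we must verify that the chosen $\rho_j$ make both the coercivity estimate and the continuity of $\ell$ hold with $\rho_2$, and that the observation term carries exactly the weight $\rho_4^{-2}$. We will first try to handle this by enlarging $s$, using $\zeta^*=\zeta_0\hat\zeta$ and the fact that $e^{2s\hat A}\ge e^{2s' A^*}$ for a suitable $s'$ when $\lambda$ is large.
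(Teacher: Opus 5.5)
Your scheme is the paper's: Lax--Milgram for the form built from the right-hand side of Corollary~\ref{cor:carleman_pesos_t}, then $w_i$ read off from $L_i^*\hat\varphi$ and $\overline h=-\rho_4^{-2}\hat\varphi_1\cara_{\omega_0}$. Your signs, the transposition identification and $w_i(\cdot,T)=0$ are fine. The gap is exactly the step you flag, and it cannot be closed with a single weight $\rho_2$ on both the sources $G_i$ and the states $w_i$. Three conditions are needed:
\begin{itemize}
\item[(a)] Coercivity forces the weight on $L^*\varphi$ in your form to satisfy $\rho_0^2\ge c\,e^{2sA^*}(\zeta^*)^4$. In the paper's notation this weight is $\rho_2^{-2}$.
\item[(b)] Passing from $\int_Q\rho_0^{-2}|w_i|^2\le C\kappa_0$ to (\ref{estimate for solution}) forces $\rho_2^2\le C\rho_0^{-2}$.
\item[(c)] Boundedness of $\ell$ on the $G$-part, via your key estimate, forces $\rho_2^{-2}\le C e^{2s\hat A}\hat\zeta^2$.
\end{itemize}
Chaining these gives
\[
e^{2s(A^*-\hat A)}\,(\zeta^*)^4\,\hat\zeta^{-2}\le C\quad\text{on }(0,T).
\]
But $A^*-\hat A=c_\lambda\,\tau(t)$ with $c_\lambda=\max_x e^{\lambda(|\Psi|_\infty+\Psi)}-\min_x e^{\lambda(|\Psi|_\infty+\Psi)}>0$, and $\tau(t)\to+\infty$ as $t\to T^-$. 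So the left-hand side blows up.

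Your remedy does not help. Write $A^*=c^*\tau$ and $\hat A=\hat c\tau$ with $\hat c<c^*<0$. The identity $e^{2s\hat A}=e^{2s'A^*}$ with $s'=s\hat c/c^*>s$ only says that the left-hand side of the Carleman inequality carries a larger effective parameter than the right-hand side. Enlarging $s$ in Corollary~\ref{cor:carleman_pesos_t} moves both sides together. You therefore end up with sources weighted by $e^{-s'A^*}$ and states by $e^{-sA^*}$, and (b) fails. Replacing $A^*$ by $\hat A$ in all the $\rho_j$ destroys (a) instead.

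What this duality argument genuinely proves is a two-weight statement. Put $\hat\rho:=e^{-s\hat A}\hat\zeta^{-1}$ on the sources, so that $\hat\rho^{-2}$ is the left-hand weight of Corollary~\ref{cor:carleman_pesos_t}, and keep the paper's $\rho_2,\rho_4$ on the state and the control. Then
\[
\int_Q\rho_2^2\bigl(|w_1|^2+|w_2|^2\bigr)dxdt+\int_{\omega_0\times(0,T)}\rho_4^2|\overline h|^2dxdt\le C\Bigl(\|\hat\rho G_1\|^2_{L^2(Q)}+\|\hat\rho G_2\|^2_{L^2(Q)}+\|w_{1,0}\|^2_{L^2(0,1)}+\|w_{2,0}\|^2_{L^2(0,1)}\Bigr),
\]
i.e.\ the sources must vanish faster at $t=T$ than the state does. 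For the later nonlinear step one then needs a compatibility such as $2A^*<\hat A$. This does hold here for $\lambda$ large, since $2A^*-\hat A\le\tau\bigl(2e^{2\lambda|\Psi|_\infty}-e^{3\lambda|\Psi|_\infty}-1\bigr)<0$ once $e^{\lambda|\Psi|_\infty}\ge 2$.

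Be aware that the paper's own proof runs the same single-weight scheme and simply asserts that $\ell$ is bounded ``from Corollary~\ref{cor:carleman_pesos_t}''. Following it will not supply the missing step. The estimate with the same $\rho_2$ on $G_i$ and on $w_i$ is not what this argument delivers.
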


\begin{proof}
    We define $\Lambda_0 =\{(\phi_1 ,\phi_2) \in [C^2(\overline{Q})]^3): \phi_1 =\phi_2 =0 \ \  \text{on} \  \ \Sigma\}$ and denote
    $$L(w_1 ,w_2)=(L_1(w_1,w_2),L_2(w_1,w_2)) \quad \text{and} \quad L^{\ast}(\phi_1 ,\phi_2)=(L_1^{\ast}(\phi_1 ,\phi_2),L_2^{\ast}(\phi_1 ,\phi_2))$$
    with
\begin{align*}
L_1(w_1,w_2)  & =w_{1,t}-b(t)\left({a}(x)w_{1,x}\right)_x-B(x,t)\sqrt{a}w_{1,x}+d_{11}(x,t)w_1 +d_{12}(x,t)w_2, \\
L_2(w_1,w_2)  & =  w_{2,t}-b(t)\left({a}(x)w_{2,x}\right)_x-B(x,t)\sqrt{a}w_{2,x}+d_{21}(x,t)w_1 +d_{22}(x,t) w_2, \\
L_1^{\ast}(\phi_1 ,\phi_2) & =-\varphi_{1,t}-b(t)\left({a}(x)\phi_{1,x}\right)_x+\left(B(x,t)\sqrt{a}\phi_1\right)_x+d_{11}(x,t)\phi_1 +d_{21}(x,t)\phi_2 ,\\
L_2^{\ast}(\phi_1 ,\phi_2)  & =-\phi_{2,t}-b(t)\left({a}(x)\phi_{2,x}\right)_x+\left(B(x,t)\sqrt{a}\phi_2\right)_x+d_{12}(x,t)\phi_1 +d_{22}(x,t) \phi_2 .
\end{align*}

We consider on $\Lambda_0$ the continuous bilinear form $\beta: \Lambda_0 \times \Lambda_0 \rightarrow \mathbb{R}$ defined by
$$
\beta \bigl((\phi_1 ,\phi_2),(\widetilde{\phi}_1,\widetilde{\phi}_2)\bigr):=\int_{Q}\bigl[\rho_2^{-2} L^{\ast}(\phi_1 ,\phi_2)\cdot L^{\ast}(\widetilde{\phi}_1,\widetilde{\phi}_2) +\cara_{\omega_0} \rho_4^{-2} \bigl((\phi_1 ,\phi_2)\cdot(\widetilde{\phi}_1,\widetilde{\phi}_2)\bigr)\bigr] dxdt.
$$

Let us denote $\Lambda :=\overline{\Lambda_0}^{\beta (\cdot ,\cdot)}$, where $\Lambda$ is the completion of $\Lambda_0$ under the norm induced by the inner product $\beta(\cdot ,\cdot)$. On the other hand, define $$\ell(\phi_1 ,\phi_2)=\int_{0}^{1} (w_{1,0} \cdot \phi_1 (x,0)+ w_{2,0} \cdot \phi_2 (x,0)) \,dx + \int_{Q} (H_1(x,t)\cdot \phi_1 + H_2(x,t) \cdot \phi_2)\, dxdt .$$

From Corollary \ref{cor:carleman_pesos_t}, it is proven that $\ell$ is bounded, and since $\beta$ is coercive in $\Lambda$ with the inner product generated by $\beta$, then from Lax-Milgram Theorem, it can be verified that there exists a unique solution $\phi=(\phi_1 ,\phi_2) \in \Lambda$ solution of
\begin{equation}\label{l-m}
    \beta(\phi ,w)=\ell (w); \,\,\, \forall w=(w_1 ,w_2) \in \Lambda\,.
\end{equation}

Define 
\begin{equation}
    \label{est-cont}
w_1 =-\rho_{2}^{-2} L_{1}^{\ast} \phi, \quad w_2 =-\rho_{2}^{-2} L_{2}^{\ast} \phi \quad \text{and} \quad \overline{h}=-\rho_{4}^{-2} \phi_1 \vert_{\omega_0 \times (0,T)}.
\end{equation}
From (\ref{l-m}), we obtain (\ref{estimate for solution}).
\end{proof}

The next result, stated below, is a immediate consequence of Theorem \ref{prop:linear_control}. Arguing in a similar way, we will establish the regularity of the control $\overline{h}$, which will be very useful to work out estimates involving multiplicative control in Section 4. The aforementioned regularity is important to control the problem (\ref{nolineal}).

\begin{coro}\label{rcont}
Let $(w_{1}^0 ,w_{2}^0)\in [L^{2}(0,1)]^2$, $\rho_2 G_i \in L^2(Q),\,\, (i=1,2)$. From the definition of the weight $\rho_j , j \in \mathbb{N} \cup \{ 0\}$, the following holds
\begin{equation}
		\label{cr1}
		\rho_{5} \overline{h} \in L^{2}(0,T;H^{1}_{a}(0,1) \cap H^{2}(0,1)) \cap L^{\infty}(0,T;H^{1}_{a}(0,1)) \qquad and
	\end{equation}
	\vspace{-0.5cm}
	\begin{equation}
		\label{cr2}
		\Vert \rho_{5} \overline{h}\|_{L^{2}(0,T;H^{1}_{a}(0,1)}+\Vert \rho_{5} \overline{h}\|_{L^{\infty}(0,T;H^{1}_{a}(0,1))}\leq C \kappa_{0}(G_{1},G_{2},w_{1,0},w_{2,0}),
	\end{equation}
	where $$\kappa_{0}(G_{1},G_{2},w_{1,0},w_{2,0})= \Vert \rho_2 G_1 \Vert^2_{L^2(Q)} + \Vert\rho_2 G_2 \Vert^2_{L^2(Q)} + \Vert w_{1,0}\Vert^2_{L_2(0,1)} + \Vert w_{2,0} \Vert^2_{L_2(0,1)}\,.$$
\end{coro}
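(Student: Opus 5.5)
Since $\overline{h}=-\rho_4^{-2}\phi_1$ on $\omega_0\times(0,T)$ by (\ref{est-cont}), we have $\rho_5\overline{h}=-\rho_5\rho_4^{-2}\phi_1 = -e^{sA^*}(\zeta^*)^{3}\phi_1$ restricted to $\omega_0$. The plan is to show that $\psi:=e^{sA^*}(\zeta^*)^{3}\phi_1$, where $(\phi_1,\phi_2)$ is the Lax--Milgram solution of (\ref{l-m}), solves an adjoint-type parabolic problem with an $L^2$ right-hand side controlled by $\kappa_0$. Parabolic regularity for the degenerate operator then gives the bound. Throughout, the weights depend only on $t$, which is the reason for working with $A^*,\zeta^*$.

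\emph{Step 1 (equation for $\phi$).} From (\ref{l-m}) and (\ref{est-cont}), $(\phi_1,\phi_2)$ satisfies in the weak sense
\[
L_1^\ast\phi=-\rho_2^2 w_1,\qquad L_2^\ast\phi=-\rho_2^2 w_2,\qquad \phi_i(T)=0.
\]
This holds because $(w_1,w_2,\overline{h})$ solves (\ref{eq:linearized_system1}) by transposition. The relevant bounds are $\|\rho_2 w_i\|_{L^2(Q)}^2\le C\kappa_0$ from (\ref{estimate for solution}), together with the weighted bound $\|\rho_4^{-1}\phi_1\|_{L^2(\omega_0\times(0,T))}^2 + \|\rho_2^{-1}L^\ast\phi\|^2\le C\kappa_0$, which is $\beta(\phi,\phi)=\ell(\phi)$ combined with the boundedness of $\ell$.

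\emph{Step 2 (weighted functions).} Set $\psi_i:=\theta(t)\phi_i$ with $\theta=e^{sA^*}(\zeta^*)^{3}$. Then
\[
L_i^\ast(\psi_1,\psi_2)=-\theta\rho_2^2 w_i-\theta'\phi_i,\qquad \psi_i(T)=0.
\]
For the first term, $\theta\rho_2^2=\rho_2\,e^{-sA^*}(\zeta^*)^{1}$, and I would check it is bounded by $C\rho_2$. For the second, $|\theta'|\le C s\,e^{sA^*}(\zeta^*)^{5}$, using $|\tau'|\le C\tau^2$ and $\zeta^*\sim\tau$. So it suffices to bound $\|e^{sA^*}(\zeta^*)^{5}\phi\|_{L^2(Q)}$. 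Corollary \ref{cor:carleman_pesos_t}, applied with $H_i=L_i^\ast\phi$, bounds $\int_Q e^{2s\hat A}\hat\zeta^2|\phi|^2$ by $\int e^{2sA^*}(\zeta^*)^4|\rho_2^2 w|^2+\int_{\omega}e^{2sA^*}(\zeta^*)^8|\phi_1|^2\le C\kappa_0$. The point here is that $\rho_2^2 e^{2sA^*}(\zeta^*)^4=1$ and $e^{2sA^*}(\zeta^*)^8=\rho_4^{-2}$.

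\emph{Main obstacle.} The hard step is the mismatch between $\hat A$ on the left of the Carleman estimate and $A^*$ in $\theta$. I would resolve it by applying Corollary \ref{cor:carleman_pesos_t} with a slightly larger parameter $\tilde s$, or with $\lambda$ chosen so that $2\tilde s\hat A\le sA^*$. This is possible because $\hat A/A^*$ is a constant bigger than $1$ in $(1,2)$ for $\lambda$ large, and it is exactly the standard device in Fernández-Cara--Guerrero type arguments. The choice of $\rho_j$ with this modified $s$ is then taken into account. The extra polynomial powers of $\zeta^*$ are absorbed by the exponential gain.

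\emph{Step 3 (regularity).} Once $L^\ast\psi=f\in L^2(Q)$ with $\|f\|^2\le C\kappa_0$ and $\psi(T)=0$, I apply the energy estimates of Theorem \ref{twpds} (Estimates I and III), with time reversed and the drift terms $(B\sqrt a\,\psi)_x$ handled using the $C^1$ assumption in H3. This gives
\[
\psi\in L^2(0,T;H^2_a)\cap L^\infty(0,T;H^1_a),
\]
with norm bounded by $C\|f\|_{L^2}$. The $H^2(0,1)$ regularity on $\omega_0$ follows from interior nondegenerate parabolic regularity, since $a\ge c>0$ on $\overline{\omega}$. Restricting to $\omega_0\times(0,T)$ yields (\ref{cr1}) and (\ref{cr2}).
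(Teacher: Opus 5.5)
\textbf{This has a genuine gap, and the paper's own proof has the same one.} Your route is the paper's route. Your multiplier $\theta=e^{sA^*}(\zeta^*)^3$ is exactly $\rho_3^{-1}$, since $\rho_5\rho_4^{-2}=\rho_3^{-1}$. Your identity for $L_i^\ast(\theta\phi)$ is (\ref{est}). The paper also finishes with parabolic regularity and $\rho_3^{-1}\phi_1=-\rho_5\overline h$ on $\omega_0$.

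The problem is that the two $L^2$ bounds on the right-hand side, which carry the whole proof, fail as you state them.
\begin{itemize}
\item \emph{First term.} You have $\theta\rho_2^2=e^{-sA^*}(\zeta^*)^{-1}=\rho_1=\zeta^*\rho_2$, not $\rho_2e^{-sA^*}\zeta^*$. Here $\zeta^*=c\,\tau$ with $\tau(t)=1/(t(T-t))\to\infty$ as $t\to T$. So $\theta\rho_2^2\le C\rho_2$ is false, and (\ref{estimate for solution}) does not control $\theta\rho_2^2w_i$.
\item \emph{Second term.} You need $\int_Q e^{2sA^*}(\zeta^*)^{10}|\phi|^2$. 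Corollary \ref{cor:carleman_pesos_t} only controls $\int_Q e^{2s\hat A}\hat\zeta^{2}|\phi|^2$. Since $A^*-\hat A=c'\tau$ with $c'>0$, the ratio of the two weights contains $e^{2sc'\tau}$. That is an exponential \emph{loss}, so nothing is available to absorb the polynomial powers.
\end{itemize}
The paper asserts the same two bounds, namely $|\rho_3^{-1}\rho_2^2w_1|\le C|\rho_2w_1|$ and $|(\rho_3^{-1})_t\phi_1|\le C|\rho_2^{-1}\phi_1|$, followed by an appeal to the Carleman estimate. Both fail for the reasons above, so the paper does not supply the missing step.

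\textbf{Your proposed cure goes the wrong way.} Taking $\tilde s>s$ makes $e^{2\tilde s\hat A}$ smaller. The condition $2\tilde s\hat A\le sA^*$ is an upper bound on the left-hand weight, which does not help here. To have $e^{2\tilde s\hat A}$ dominate $e^{2sA^*}$ you need $\tilde s\le (A^*/\hat A)\,s<s$. But then the right-hand weights $e^{2\tilde sA^*}$ exceed those in $\beta$, so the right-hand side is no longer bounded by $\beta(\phi,\phi)\le C\kappa_0$. The left-hand weight of the $t$-only Carleman estimate decays at rate $|\hat A|$ and the right-hand one at rate $|A^*|<|\hat A|$. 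Hence any multiplier you can control must decay at least like $e^{s\hat A}$, up to polynomial factors.

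For instance, $\theta=e^{s\hat A}\hat\zeta^{-1}$ works. Then $\theta\rho_2^2\le Ce^{s(\hat A-A^*)}\rho_2\le C\rho_2$ and $|\theta'|\le Cs\,e^{s\hat A}\hat\zeta$, so your Step 3 goes through. But it gives regularity of $\theta\rho_4^2\overline h$. Its weight $e^{-s(2A^*-\hat A)}\hat\zeta^{-1}(\zeta^*)^{-8}$ blows up only at rate $s(2|A^*|-|\hat A|)$. This rate is positive precisely because $\hat A/A^*<2$, which is where your $(1,2)$ observation actually belongs. However, the weight is strictly weaker than $\rho_5$.

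So this route proves (\ref{cr1})--(\ref{cr2}) only with $\rho_5$ replaced by such a weight. Its later uses in Section \ref{sec:control for nonlinear system} would then have to be rechecked with that weight. A minor point: in Step 1, $\phi_i(T)=0$ is not available, since $\phi$ may blow up at $T$. Only $\psi(T)=0$ is needed, and it follows from the vanishing of $\theta$.
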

\begin{proof}
Proceeding as in the proof of Theorem \ref{prop:linear_control} to $(\overline{\phi}_1,\overline{\phi}_2)=(\rho_{3}^{-1}\phi_1 ,\rho_{3}^{-1}\phi_2)$ in (\ref{est-cont}), we have that 
\begin{equation}\label{est}
   L_{1}^{\ast} (\overline{\phi}_1,\overline{\phi}_2) =-\rho_{3}^{-1}\rho_{2}^{2}w_1 -(\rho_{3}^{-1})_t \phi_1, \quad L_{2}^{\ast} (\overline{\phi}_1,\overline{\phi}_2) =-\rho_{3}^{-1}\rho_{2}^{2}w_2 -(\rho_{3}^{-1})_t \phi_2 , \end{equation}
   where
   \begin{align*}
L_1^{\ast}(\overline{\phi}_1,\overline{\phi}_2) & =-(\rho_{3}^{-1}\phi_1)_{t}-b(t)\left({a}(x)(\rho_{3}^{-1}\phi_1))_{x}\right)_x+\left(B(x,t)\sqrt{a}(\rho_{3}^{-1}\phi_1)\right)_x+d_{11}(x,t)(\rho_{3}^{-1}\phi_1)\\ & \quad +d_{21}(x,t)(\rho_{3}^{-1}\phi_2) ,\\
L_2^{\ast}(\overline{\phi}_1,\overline{\phi}_2)  & =-(\rho_{3}^{-1}\phi_2)_{t}-b(t)\left({a}(x)(\rho_{3}^{-1}\phi_2)_{x}\right)_x+\left(B(x,t)\sqrt{a}(\rho_{3}^{-1}\phi_2)\right)_x\\
&\quad +d_{12}(x,t)(\rho_{3}^{-1}\phi_1) +d_{22}(x,t) (\rho_{3}^{-1}\phi_2) .
\end{align*}

   and
   \begin{equation}\label{cont}
   \rho_{3}^{-1}\overline{h}=-\rho_{3}^{-1}\rho_{4}^{-2} \phi_1 \vert_{\omega_0 \times (0,T)}. 
\end{equation}

It is not difficult to check that $$\vert \rho_{3}^{-1}\rho_{2}^{2}w_1\vert \leq C \vert \rho_2 w_1\vert, \quad\vert \rho_{3}^{-1}\rho_{2}^{2}w_2\vert \leq C \vert \rho_2 w_2\vert$$ and $$\vert (\rho_{3}^{-1})_t \phi_1\vert \leq C\vert \rho_{2}^{-1} \phi_1\vert , \quad \vert (\rho_{3}^{-1})_t \phi_2\vert \leq C\vert \rho_{2}^{-1} \phi_2 \vert.$$

So, from Theorem \ref{prop:linear_control}, one has 
$$\Vert \rho_{3}^{-1}\rho_{2}^{2}w_1 \Vert_{L^2 (Q)}^2 +\Vert \rho_{3}^{-1}\rho_{2}^{2}w_2 \Vert_{L^2 (Q)}^2 \leq C\left( \Vert w_{1,0}\Vert_{L^2 (0,1)}^{2}+\Vert w_{2,0}\Vert_{L^2 (0,1)}^{2}+\sum_{i=1}^{2}\Vert \rho_{2} G_i \Vert_{L^2(Q)}^{2} \right)$$
and from Carleman estimate, we can deduce
$$\Vert (\rho_{3}^{-1})_t \phi_1 \Vert_{L^2 (Q)}^2 +\Vert (\rho_{3}^{-1})_t \phi_2 \Vert_{L^2 (Q)}^2\leq C\left( \Vert w_{1,0}\Vert_{L^2 (0,1)}^{2}+\Vert w_{2,0}\Vert_{L^2 (0,1)}^{2}+\sum_{i=1}^{2}\Vert \rho_{2} G_i \Vert_{L^2(Q)}^{2}\right).$$
Therefore, 
$$(\rho_{3}^{-1}\rho_{2}^{2}w_1, \rho_{3}^{-1}\rho_{2}^{2}w_2, (\rho_{3}^{-1})_t \phi_1, (\rho_{3}^{-1})_t \phi_2) \in L^2 (Q) \quad \text{and}$$
\begin{equation}\label{lag.2}
\Vert \rho_{3}^{-1}\rho_{2}^{2}w_1 \Vert_{L^2 (Q)}^2 +\Vert \rho_{3}^{-1}\rho_{2}^{2}w_2 \Vert_{L^2 (Q)}^2 +\Vert (\rho_{3}^{-1})_t \phi_1 \Vert_{L^2 (Q)}^2 +\Vert (\rho_{3}^{-1})_t \phi_2 \Vert_{L^2 (Q)}^2 \leq C\kappa_{0}(G_{1},G_{2},w_{1,0},w_{2,0})\,.
 \end{equation}

 In view of (\ref{est}), (\ref{lag.2}) and parabolic regularity, we have that
 $$(\overline{\phi}_1,\overline{\phi}_2)=(\rho_{3}^{-1}\phi_1 ,\rho_{3}^{-1}\phi_2) \in L^{2}(0,T;H^{1}_{a}(0,1) \cap H^{2}(0,1)) \cap L^{\infty}(0,T;H^{1}_{a}(0,1))\,.$$

 Now, from (\ref{cont}) one has
 \begin{equation}\label{rc}
 \begin{split}
    \rho_{3}^{-1}\phi_1 =& - \rho_{3}^{-1}\rho_{4}^{2} \overline{h}   \\
        =& -(e^{sA^*}  (\zeta^*)^{3})\cdot (e^{-2sA^*}  (\zeta^*)^{-8}) \overline{h}\\
        =& (e^{-sA^*}  (\zeta^*)^{-5}) \overline{h}\\
    =&  -\rho_5 \overline{h} .  
 \end{split}
\end{equation}

 Therefore,
 $$\rho_5 \overline{h} \in L^{2}(0,T;H^{1}_{a}(0,1) \cap H^{2}(0,1)) \cap L^{\infty}(0,T;H^{1}_{a}(0,1))$$

 and
 $$\Vert \rho_5 \overline{h} \Vert_{L^2(0,T; H^1_a (0,1) \cap H^2 (0,1))} + \Vert \rho_5 \overline{h}  \Vert_{L^\infty(0,T; H^1_a(0,1))} \leq C\kappa_{0}(G_{1},G_{2},w_{1,0},w_{2,0}). $$
\end{proof}

\subsection{Weighted energy estimates}

In order to get the local null controllability of the nonlinear system (\ref{nolineal}) we need the following additional estimates.

\begin{propo}
\label{addicional_estimates_case_linear}
Under the hypothesis of Theorem \ref{prop:linear_control}, we have, furthermore, that the control $\overline h\in L^{2}(\omega_0 \times (0,T))$ and the associated states $(w_1 ,w_2) \in C^{0}([0,T];L^{2}(0,1))\cap L^{2}(0,T;H^{1}_{a}(0,1))$, solution of (\ref{eq:linearized_system1}), satisfies the additional estimates 
\begin{equation}\label{des Proposition 5}
        \begin{array}{c}
            \displaystyle \sup_{t \in [0,T]}(\rho_3^{2}\|w_1\|^{2}_{L^{2}(0,1)}) + \displaystyle\sup_{t \in [0,T]}(\rho_3^{2}\|w_2\|^{2}_{L^{2}(0,1)})\vspace{0.1cm}\\
            +\displaystyle\int_{Q}\rho_3^{2} a(x)(| |w_{1,x}|^{2}+|w_{2,x}|^{2})dxdt\ \leq C \kappa_{0}(G_{1},G_{2},w_{1,0},w_{2,0})  
        \end{array}
    \end{equation}
    and, if $w_{1,0}, \ w_{2,0} \in H^{1}_{a}(0,1)$ 
    \begin{equation}\label{des Proposition 6}
        \begin{array}{c}
            \displaystyle\sup_{t \in [0,T]}(\rho_{4}^{2}\|\sqrt{a}w_{1,x} \|^{2}_{L^{2}(0,1)}) + \displaystyle\sup_{t \in [0,T]}(\rho_{4}^{2}\|\sqrt{a}w_{2,x} \|^{2}_{L^{2}(0,1)})\\
            + \displaystyle\int_{Q}\rho_{4}^{2}(|w_{1,t}|^{2}+|w_{2,t}|^{2}+|(a(x)w_{1,x})_{x}|^{2} + |(a(x)w_{2,x})_{x}|^{2} )dxdt\\
            \leq C \kappa_{1}(G_{1},G_{2},w_{1,0},w_{2,0}),  
        \end{array}
    \end{equation}
    where $\kappa_{1}(G_{1},G_{1},w_{1,0},w_{2,0})= \|\rho_2 G_1 \|^2_{L^2(Q)} + \|\rho_2 G_2 \|^2_{L^2(Q)} + \|w_{1,0}\|^2_{H^{1}_{a}} + \|w_{2,0}\|^2_{H^{1}_{a}}$. 
\end{propo}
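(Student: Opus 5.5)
The plan is the standard weighted energy argument: multiply the state by a weight depending only on $t$ and absorb everything into the bound (\ref{estimate for solution}) of Theorem \ref{prop:linear_control}. Since $\rho_j=e^{-sA^*}(\zeta^*)^{-j}$ depends only on $t$, the weighted states satisfy the same spatial equations, with an extra source coming from the time derivative of the weight.

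\emph{Proof of (\ref{des Proposition 5}).} Set $v_i:=\rho_3 w_i$, $i=1,2$. Then $(v_1,v_2)$ solves (\ref{eq:linearized_system1}) with right-hand sides $\rho_3\overline h\cara_{\omega_0}+\rho_3G_1+(\rho_3)_t w_1$ and $\rho_3 G_2+(\rho_3)_t w_2$, and initial data $\rho_3(0)w_{i,0}$. Here $\rho_3(0)$ is a finite constant because $\sigma(0)>0$. I would check the pointwise bounds
\[
|(\rho_3)_t|\le C\rho_2,\qquad \rho_3\le C\rho_2,\qquad \rho_3\le C\rho_4 .
\]
The first holds because $|(e^{-sA^*})_t|\le Cs|A^*_t|e^{-sA^*}$ with $|A^*_t|\le C(\zeta^*)^2$, and $(\zeta^*)^{-3}(\zeta^*)^{2}=(\zeta^*)^{-1}$. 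The last holds because $\rho_3/\rho_4=\zeta^*$ is bounded, since $\tau$ is bounded on $[0,T]$ near $t=0$; near $t=T$ one has $\rho_4\ge c\rho_3 e^{-sA^*}$-type comparisons via (\ref{eq:compara_rhos}).

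With these bounds I multiply the $v_1$- and $v_2$-equations by $v_1$ and $v_2$ and integrate over $(0,1)$. I handle the convection terms with $|(B_i\sqrt a\,v_{i,x},v_i)|\le \frac{b_0}{4}\|\sqrt a v_{i,x}\|^2+C\|v_i\|^2$, the zero-order terms via $d_{ij}\in L^\infty$, and the sources by Cauchy–Schwarz. Gronwall's lemma then bounds $\sup_t\|v_i\|^2+\int_Q a|v_{i,x}|^2$ by
\[
C\bigl(\|w_{i,0}\|^2+\|\rho_2 G_i\|^2+\|\rho_2 w_i\|^2+\|\rho_4\overline h\|^2\bigr),
\]
which is at most $C\kappa_0$ by (\ref{estimate for solution}).

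\emph{Proof of (\ref{des Proposition 6}).} Set $\theta_i:=\rho_4 w_i$, which solves the analogous system with source terms $\rho_4\overline h\cara_{\omega_0}+\rho_4G_i+(\rho_4)_t w_i$, and $|(\rho_4)_t|\le C\rho_3$. I multiply by $-(a\theta_{i,x})_x$ and by $\theta_{i,t}$, as in Estimates II and III of Theorem \ref{twpds}. This uses the identity $(\theta_t,-(a\theta_x)_x)=\frac12\frac{d}{dt}\|\sqrt a\theta_x\|^2$, valid for $\theta\in H^2_a$ by the integration by parts justified in the weakly degenerate setting (K<1). The degenerate convection term is bounded by $C\|\sqrt a\theta_x\|\,\|(a\theta_x)_x\|$ and absorbed; it needs no derivative of $B_i$. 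The sources are controlled by $\|\rho_4 G_i\|\le C\|\rho_2G_i\|$, $\|\rho_4\overline h\|$, and $\|\rho_3 w_i\|$, all covered by step one. Gronwall's lemma with initial data in $H^1_a$ then gives (\ref{des Proposition 6}) with $\kappa_1$. Finally, $\rho_4 w_{i,t}=\theta_{i,t}-(\rho_4)_tw_i$ recovers the $w_t$ term.

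\emph{Main obstacle.} I expect the weight comparisons near $t=T$ to be the hardest part, namely $|(\rho_j)_t|\le C\rho_{j-1}$ together with the control term needing $\rho_3\le C\rho_4$. The first step is to verify $|\tau'|\le C\tau^2$ and $A^*<0$, so that the exponential factors match exactly and only powers of $\zeta^*$ are traded.
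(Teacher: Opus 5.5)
\textbf{Gap: the weight comparisons you rely on are false near $t=T$.} Your overall scheme is the paper's: weighted energy estimates with the purely time-dependent weights, absorbed by (\ref{estimate for solution}). But the pointwise comparisons fail exactly where they matter. Since $A^*=-C_1\tau$ and $\zeta^*=C_2\tau$ with $\tau\to\infty$ as $t\to T$, one has $\rho_j/\rho_{j+1}=\zeta^*\to\infty$.
\begin{itemize}
\item $\rho_3\le C\rho_4$ is false. The true inequality is $\rho_4\le C\rho_3$, as in (\ref{eq:compara_rhos}).
\item $|(\rho_3)_t|\le C\rho_2$ is false. Your own computation gives $|(\rho_3)_t|\le Cs\,e^{-sA^*}(\zeta^*)^{-1}=Cs\rho_1=Cs\,\zeta^*\rho_2$.
\item $|(\rho_4)_t|\le C\rho_3$ is false. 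The correct bound is $|(\rho_4)_t|\le C\rho_2$.
\end{itemize}
Consequently, in step one you cannot treat $\rho_3\overline h\cara_{\omega_0}$ and $(\rho_3)_t w_1$ as $L^2(Q)$ sources. The estimate (\ref{estimate for solution}) controls neither $\|\rho_3\overline h\|_{L^2}$ nor $\|\rho_1 w_1\|_{L^2}$, so the Cauchy--Schwarz/Gronwall argument as you describe it does not close. Your ``main obstacle'' is therefore misidentified: the control term does not need $\rho_3\le C\rho_4$, and that inequality cannot be proved.

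\textbf{The fix.} Never estimate these terms alone. Keep them paired with the test function $v_1=\rho_3 w_1$ and split the weight.
\begin{itemize}
\item For the control, $\rho_3^2=\rho_2\rho_4$ gives $\int_0^1\rho_3^2\,\overline h\,w_1\,dx\le \frac12\int_{\omega_0}\rho_4^2|\overline h|^2dx+\frac12\int_0^1\rho_2^2|w_1|^2dx$.
\item For the weight derivative, the correct bound is the product one, $|\rho_3(\rho_3)_t|\le C\rho_2^2$. It follows from $A^*_t=\bar C\zeta^*_t$ and $|\zeta^*_t|\le C(\zeta^*)^2$, and yields $\int_0^1\rho_3|(\rho_3)_t|\,|w_i|^2dx\le C\int_0^1\rho_2^2|w_i|^2dx$. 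This is precisely the paper's treatment of $\mathcal M$.
\item In step two, use $|(\rho_4)_t|\le C\rho_2$ (equivalently $\rho_4|(\rho_4)_t|\le C\rho_3^2$) together with $\|\rho_2 w_i\|_{L^2(Q)}^2\le C\kappa_0$ from Theorem \ref{prop:linear_control}. This handles both the source $(\rho_4)_t w_i$ and the recovery $\rho_4 w_{i,t}=\theta_{i,t}-(\rho_4)_t w_i$.
\end{itemize}
With these corrections your argument goes through. Your extra multiplier $-(a\theta_{i,x})_x$ is then a genuine plus: it produces the $\int_Q\rho_4^2|(a w_{i,x})_x|^2$ part of (\ref{des Proposition 6}), which the paper's written proof (testing only with $\rho_4^2 w_{i,t}$) leaves implicit.
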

\begin{proof}
We proceed following the steps of \cite{DemarqueLimacoViana_deg_eq2018}, see also \cite{Joao_Juan_Suerlan-25,GLoY-2026}.
    	
	Let us start multiplying the first equation in (\ref{eq:linearized_system1}) by $\rho_2^{2} w_1$ and the second one by $\rho_3^{2} w_2$ and let us integrate over $[0,1]$. Hence, using that $\rho_3^{2} = \rho_{2}\rho_{4}$, and $\rho_{4}\leq C\rho_2$, we have
	\begin{equation}\label{second estimate3}
		\begin{array}{l}
			\dfrac{1}{2}\dfrac{d}{dt}\displaystyle\int_{0}^{1}\rho_3^{2}(|w_1|^{2}+|w_2|^{2})dx + b(t) \displaystyle\int_{0}^{1}\rho_3^{2}a(x)(|w_{1,x}|^{2}+|w_{2,x}|^{2}) dx \\
            + \displaystyle\int_0^1 \rho_3^{2} \sqrt{a(x)}(B_1 w_{1,x} w_1+B_2 w_{2,x} w_2) dx   \\
			\leq C\left(\displaystyle\int_{0}^{1}\rho_{2}^{2}(|w_1|^{2}+|w_2|^{2})dx +  \displaystyle\int_{O}\rho_{4}^{2}|h|^{2}dx +
			\displaystyle\int_{0}^{1}\rho_{2}^{2}(|G_{1}|^{2}+|G_{2}|^{2})dx\right)\hspace{0.1cm}+ \mathcal{M},
		\end{array}
	\end{equation}
	where $\mathcal{M} = \displaystyle\int_{0}^{1}\rho_3(\rho_3)_{t}(|w_1|^{2}+|w_2|^{2})dx$ 
	and $(\cdot)_{t}=\frac{d}{dt}(\cdot)$.
    Recall that
	$A^*(t) = C_1 \tau(t)$, and $\zeta^*(t) = C_2 \tau(t)$, then we have that $(A^*)_t=\bar C (\zeta^*)_{t}$ and consequently
	\begin{equation*}
		\begin{split}
			\rho_3(\rho_3)_{t} &= e^{-sA^*}(\zeta^*)^{-3}\left(-se^{-sA^*} A^*_{t}(\zeta^*)^{-3} -3 e^{-sA^*}(\zeta^*)^{-4}(\zeta^*)_{t}\right)    \\
			&= -e^{-2sA^*}(\zeta^*)^{-4}(\zeta^*)_{t}\left(s(\zeta^*)^{-2}\bar{C} + 3(\zeta^*)^{-3} \right)  \\
			&=-\rho_{2}^{2}(\zeta^*)_{t}\left(s(\zeta^*)^{-2}\bar{C} + 3(\zeta^*)^{-3} \right).
		\end{split}
	\end{equation*}
	Thus, for any $t\in [0,T)$,
	\begin{equation*}
		\begin{array}{l}
			|\rho_3(\rho_3)_{t}|\leq C\rho_{2}^{2}\tau^{2}|s(\zeta^*)^{-2}\bar{C} + 3(\zeta^*)^{-3}|  
			\leq C\rho_{2}^{2}|s\bar{C}+3(\zeta^*)^{-1}| \leq C\rho_{2}^{2},
		\end{array}
	\end{equation*}
	and then, we obtain
	\begin{equation*}
		\mathcal{M}\leq C \displaystyle\int_{0}^{1}{\rho_{2}^{2}}(|w_1|^{2}+|w_2|^{2})dx.
	\end{equation*}
	
	\noindent Therefore, using Young's inequality, (\ref{second estimate3}) becomes, for small $\epsilon>0$ and using the boundedness of $c_{ij}$ and $d_i$, for $i,j=1,2$, 
	\begin{multline*}
		\dfrac{1}{2}\dfrac{d}{dt}\displaystyle\int_{0}^{1}\rho_3^{2}(|w_1|^{2}+|w_2|^{2})dx + b(t) \displaystyle\int_{0}^{1}\rho_3^{2} a(x)(|w_{1,x}|^{2}+|w_{2,x}|^{2})dx\\
		\leq C\left(\displaystyle\int_{0}^{1}\rho_{2}^{2}(|w_1|^{2} +|w_2|^{2} )dx + \displaystyle\int_{\omega}\rho_{4}^{2}|h|^{2}dx + \displaystyle\int_{0}^{1}\rho_{2}^{2}(|G_{1}|^{2}+|G_{2}|^{2})dx\right)\\
		+ \epsilon \displaystyle\int_{0}^{1}\rho_3^{2} a(x)(|w_{1,x}|^{2}+|w_{2,x}|^{2})dx + C_\epsilon \int_0^1 \left( |w_1|^{2} +|w_2|^{2} \right)dx.
	\end{multline*}
    
	Thus, since $b(t)$ is bounded and $\rho_0$ is bounded by below, there is constant $D>0$ such that
	\begin{multline*}
		\dfrac{1}{2}\dfrac{d}{dt}\displaystyle\int_{0}^{1}\rho_3^{2}(|y_1|^{2}+|y_2|^{2})dx + \displaystyle\int_{0}^{1}\rho_3^{2} a(x)(|w_{1,x}|^{2}+|w_{2,x}|^{2})dx\\
		\leq D\left(\displaystyle\int_{0}^{1}\rho_{2}^{2}(|w_1|^{2} +|w_2|^{2})dx + \displaystyle\int_{\omega}\rho_{4}^{2}|h|^{2}dx + \displaystyle\int_{0}^{1}\rho_{2}^{2}(|G_{1}|^{2}+|G_{2}|^{2})dx\right),
	\end{multline*}
	and, integrating in time,  we conclude (\ref{des Proposition 5}) by the Linear Theorem.


    Now, to prove (\ref{des Proposition 6}), multiply the  first equation in (\ref{eq:linearized_system1}) by $\rho_{4}^{2} w_{1,t}$ and the second one by $\rho_{4}^{2} w_{2,t}$ and integrate over $[0,1]$. We get
    \begin{equation*}
        \begin{array}{l}
            \displaystyle\int_{0}^{1}\rho_{4}^{2}(|w_{1,t}|^{2}+|w_{2,t}|^{2})dx + \dfrac{1}{2} b(t) \displaystyle\int_{0}^{1}\rho_{4}^{2} a(x) \dfrac{d}{dt}(|w_{1,x}|^{2}+|w_{2,x}|^{2})dx  \vspace{0.1cm}\\
             \quad+ \displaystyle\int_0^1 \rho_{4}^{2} (d_{11} w_1 + d_{12}w_2) w_{1,t} dx +\int_0^1 \rho_{4}^{2} (d_{21}w_1 + d_{22}w_2) w_{2,t} dx
              \\
             \quad - \displaystyle \int_0^1 \rho_{4}^{2} \sqrt{a}w_{1,x} w_{1,t} dx -\int_0^1 \rho_{4}^{2} \sqrt{a}w_{2,x} w_{2,t} dx \\
            \leq C\left(\displaystyle\int_{\omega}\rho_{4}^{2}|h|^{2}dx + \displaystyle\int_{0}^{1}\rho_{2}^{2}(|G_1|^{2}+|G_2|^{2})dx\right) + \dfrac{1}{4}\displaystyle\int_{0}^{1}\rho_{4}^{2}(|w_{1,t}|^{2}+|w_{2,t}|^{2})dx.
        \end{array}
    \end{equation*}
    Thus, using that $\rho_3^{2} = \rho_2 \rho_4$, Young's inequality and the boundedness of $d_{ij}$ and $B_i$, for $i,j=1,2$, we get
    \begin{equation}\label{third estimate}
        \begin{array}{l}
            \displaystyle\int_{0}^{1}\rho_{4}^{2}(|w_{1,t}|^{2}+|w_{2,t}|^{2})dx + \dfrac{1}{2} b(t) \dfrac{d}{dt}\displaystyle\int_{0}^{1}\rho_{4}^{2} a(x)(|w_{1,x}|^{2}+|w_{2,x}|^{2})dx  \vspace{0.1cm}\\
            \leq D\left(\displaystyle\int_{0}^{1}\rho_{2}^{2}(|w_1|^{2} + |w_2|^{2})dx + \displaystyle\int_{O}\rho_{4}^{2}|h|^{2}dx + \displaystyle\int_{0}^{1}\rho_{2}^{2}(|G_1|^{2}+|G_2|^{2})dx\right) \vspace{0.1cm}\\
            \quad + \dfrac{1}{2}\displaystyle\int_{0}^{1}\rho_{4}^{2}(|w_{1,t}|^{2}+|w_{2,t}|^{2})dx + |\widetilde{\mathcal{M}}|,
        \end{array}
    \end{equation}
    where $\widetilde{\mathcal{M}}= \dfrac{1}{2}\displaystyle\int_{0}^{1}(\rho_{4}^{2})_{t}\, a(x)(|w_{1,x}|^{2}+|w_{2,x}|^{2})dx$.
    Since $|\zeta_{t}|\leq C\zeta^{2}$,  
    we have that $|(\rho^{2}_{1})_{t}|\leq C\rho_3^{2}$. 
    Hence,
    $$
        |\widetilde{\mathcal{M}}| \leq C\displaystyle \int_{0}^{1}\rho_3^{2}\, a(x)(|w_{1,x}|^{2}+|w_{2,x}|^{2})dx.
    $$ 
    
    Thus, (\ref{third estimate}) gives 
    \begin{equation*}
        \begin{array}{l}
            \dfrac{1}{2}\displaystyle\int_{0}^{1}\rho_{4}^{2}(|w_{1,t}|^{2}+|w_{2,t}|^{2})dx + \dfrac{1}{2} b(t)  \dfrac{d}{dt}\displaystyle\int_{0}^{1}\rho_{4}^{2} a(x)(|w_{1,x}|^{2}+|w_{2,x}|^{2})dx  \vspace{0.1cm}\\
             \leq D\left(\displaystyle\int_{0}^{1}\rho_{2}^{2}(|w_1|^{2} + |w_2|^{2})dx + \displaystyle\int_{\omega}\rho_{4}^{2}|h|^{2}dx + \displaystyle\int_{0}^{1}\rho_{2}^{2}(|G_1|^{2}+|G_2|^{2})dx\right) \vspace{0.1cm}\\
            \, + \, C\displaystyle\int_{0}^{1}\rho_3^{2}\, a(x)(|w_{1,x}|^{2}+|w_{2,x}|^{2})dx.
        \end{array}
    \end{equation*}
    
     Integrating the previous inequality from $0$ to $t$ and using the boundedness of $b(t)$ and the first estimate, (\ref{des Proposition 5}), we arrive to
    \begin{equation}\label{primeira da des proposição 6}
        \begin{array}{c}
            \displaystyle\sup_{t \in [0,T]}(\rho_{4}^{2}\|\sqrt{a}w_{1,x} \|^{2}_{L^{2}(0,1)})+ \displaystyle\sup_{t \in [0,T]}(\rho_{4}^{2}\|\sqrt{a}w_{2,x} \|^{2}_{L^{2}(0,1)}) + \displaystyle\int_{Q}\rho_{4}^{2}(|w_{1,t}|^{2}+|w_{2,t}|^{2})dxdt\\
            \leq C \kappa_{1}(G_1,G_2,w_{1,0},w_{2,0}).
         \end{array}
     \end{equation}
\end{proof}

\section{Null controllability of the nonlinear system}
\label{sec:control for nonlinear system}

\qquad In the present section, we will prove the local and global null controllability of the nonlinear systems (\ref{nolineal}). The null controllability of this system is equivalent to the local exact controllability of the solution to (\ref{eq:PDE}). To this end, we will apply Liusternik's Inverse Function Theorem (\cite{Alekseev}) in infinite dimensional spaces.

Initially, remember that from (\ref{new control})
$$
\overline{h} \cara_{_{{\omega}_0}}:={h}\cara_{_{{\omega}}}\widetilde{y} \quad \text{with} \quad \vert \widetilde{y} \vert \geq C >0\quad \text{and} \quad \omega_0\subset\subset \omega.
$$
In this way, we see that
$$
h\cara_{\omega}(w_1+\widetilde{y})= \overline{h}\cara_{\omega_0}\left(\frac{\ 1 \ }{\widetilde{y}} w_1 +1 \right).
$$ 

Thereby, we will rewrite the nonlinear system (\ref{nolineal}) in the following form:
\begin{equation}\label{nolineal1}
    \begin{cases}
		w_{1,t}-b(t)\left({a}(x)w_{1,x}\right)_x-B_1(x,t)\sqrt{a}w_{1,x}+F_1\left(x,t,w_1+\widetilde{y},w_2+\widetilde{z}\right) \\
        \qquad -F_1\left(x,t,\widetilde{y},\widetilde{z}\right)=\overline{h}\cara_{\omega_0}\left(\frac{\ 1 \ }{\widetilde{y}} w_1 +1 \right), & \ \ \ \text{in} \ \ \ {Q},\\
        w_{2,t}-b(t)\left({a}(x)w_{2,x}\right)_x-B_2(x,t)\sqrt{a}w_{2,x}+F_2\left(x,t,w_1+\widetilde{y},w_2+\widetilde{z}\right) \\
        \qquad - F_2\left(x,t,\widetilde{y},\widetilde{z}\right)=0, & \ \ \ \text{in} \ \ \ {Q},\\
		w_1=0, \ \ w_2=0 & \ \ \ \text{on} \ \ \ {\Sigma},\\
		w_1(0)=y_0-\widetilde{y}_0=w_{1,0}, \ \ w_2(0)=z_0-\widetilde{z}_0=w_{2,0}, & \ \ \ \text{in} \ \ \ (0,1),
    \end{cases}
\end{equation}

From the linearized system (\ref{eq:linearized_system1}), one has
\begin{align*}
G_1  & =w_{1,t}-b(t)\left({a}(x)w_{1,x}\right)_x-B_1 (x,t)\sqrt{a}w_{1,x}+d_{11}(x,t)w_1 + d_{12}(x,t)w_2 -\overline{h} \cara_{_{{\omega}_0}}\,,\\
 G_2 &=w_{2,t}-b(t)\left({a}(x)w_{2,x}\right)_x-B_2 (x,t)\sqrt{a}w_{2,x}+d_{21}(x,t)w_1 + d_{22}(x,t)w_2\,,
 \end{align*}
 with
 \begin{align*}
    d_{11}(x,t):=\partial_3 F_1 (x,t,\widetilde{y},\widetilde{z}), & \quad d_{12}(x,t):=\partial_4 F_1 (x,t,\widetilde{y},\widetilde{z}),\\
    d_{21}(x,t):=\partial_3 F_2 (x,t,\widetilde{y},\widetilde{z}) &\,\,\, \text{and} \,\,\, d_{22}(x,t):=\partial_4 F_2 (x,t,\widetilde{y},\widetilde{z}).
\end{align*}

We then introduce the following appropriate spaces and mappings so that the conditions of Liusternik's Theorem are satisfied:
\begin{align*}
\mathsf{V}  & :=\left\{ (w_1 ,w_2,\overline{h}) \in [L^2 (Q)]^2 \times L^{2}(\omega_0 \times (0,T)) ;\right.\\
&\qquad \left. w_i(\cdot,t) \,\,(i=1,2) \ \text{is abs. continuous in}\ (0,1),\ \text{a.e. in}\ [0, T], \right. \\
&\qquad \left. \rho_2 w_i \in L^2(Q), \quad \rho_4 \overline{h} \in L^{2}(\omega_0 \times (0,T)), \quad \rho_2 G_i \in L^2(Q), \right.\\
& \qquad\left.   w_i (1, t) \equiv w_i (0,t) \equiv 0 \ \text{a.e in}\ [0, T], \quad w_i (\cdot,0) \in
H_{a}^{1}(0,1)\right \},  \\
\end{align*}
\[
\mathsf{Z}:=\left\{  (k_1,k_2) \in [L^{2}(Q)]^2  :  \rho_{2}k_i \in L^2(Q)\,\,(i=1,2) \right\}  
\]
and
\[
\mathsf{W}:=\mathsf{Z}\times [H_{a}^{1}(0,1)]^2 .
\]
These spaces are endowed by the natural norms.

Now, let us recall the mapping $\mathcal{A}$ defined on $\mathsf{V}$ by 
$$
\mathcal{A} (w_1,w_2,\overline{h})=(\mathcal{A}_{1,1}(w_1,w_2,\overline{h}),\mathcal{A}_{1,2}(w_1,w_2,\overline{h}),w_1(\cdot,0),w_2(\cdot,0)), 
$$
where
\begin{align*}
\mathcal{A}_{1,1}(w_1,w_2,\overline{h})  & =w_{1,t}-b(t)\left({a}(x)w_{1,x}\right)_x-B_1(x,t)\sqrt{a}w_{1,x}+F_1\left(x,t,w_1+\widetilde{y},w_2+\widetilde{z}\right)\\
 & \quad -F_1\left(x,t,\widetilde{y},\widetilde{z}\right)-\overline{h}\cara_{\omega_0}\left(\frac{\ 1 \ }{\widetilde{y}} w_1 +1 \right),\\
\mathcal{A}_{1,2}(w_1,w_2,\overline{h}) &=w_{2,t}-b(t)\left({a}(x)w_{2,x}\right)_x-B_2(x,t)\sqrt{a}w_{2,x}+F_2\left(x,t,w_1+\widetilde{y},w_2+\widetilde{z}\right) \\
  & \quad  -F_2\left(x,t,\widetilde{y},\widetilde{z}\right).
\end{align*}

Note that, isolating the linear and nonlinear parts of $\mathcal{A}$, we can write
$$
\mathcal{A}(w_1,w_2,\overline{h}):=L(w_1,w_2,\overline{h}) + N(w_1,w_2,\overline{h}), \,\,\,\text{with}
$$
\begin{align*}
L(w_1,w_2,\overline{h})  & =(w_{1,t}-b(t)\left({a}(x)w_{1,x}\right)_x-B_1(x,t)\sqrt{a}w_{1,x}-\overline{h}\cara_{_{{\omega}_0}}, \\
 & \qquad w_{2,t}-b(t)\left({a}(x)w_{2,x}\right)_x-B_2(x,t)\sqrt{a}w_{2,x}, w_1(\cdot,0),w_2(\cdot,0)),\\
N(w_1,w_2,\overline{h}) &=(F_1\left(x,t,w_1+\widetilde{y},w_2+\widetilde{z}\right)-F_1\left(x,t,\widetilde{y},\widetilde{z}\right)-\overline{h}\cara_{_{{\omega}_0}} \frac{\ 1 \ }{\widetilde{y}} w_1, \\
  & \qquad  F_2\left(x,t,w_1+\widetilde{y},w_2+\widetilde{z}\right)-F_2\left(x,t,\widetilde{y},\widetilde{z}\right),0,0).
\end{align*}

Applying \emph{Liusternik’s Inverse Mapping Theorem} (see \cite{Alekseev}), we will prove that $\mathcal{A}$ has a right inverse mapping defined in a small ball contained in $\mathsf{W}$. Due to the choice of the spaces $\mathsf{V}$ and $\mathsf{W}$, the existence of the inverse mapping will imply the local null controllability of (\ref{nolineal1}). Before doing it, we will establish some results which will guarantee that $\mathcal{A}$ satisfies the hypotheses of Liusternik’s Theorem.

\begin{itemize}
    \item[(i)] The mapping $\mathcal{A}: \mathsf{V}\rightarrow \mathsf{W}$ is well defined:
\end{itemize}

Indeed, based on the definition of $\mathsf{V}, \mathsf{W}$ and from the linearity of $L$, we have that $L$ is bounded and satisfy
$$\Vert \rho_2 L(w_1,w_2,\overline{h})\Vert_{L^2(Q)}^{2} \leq C\Vert (w_1 ,w_2 ,\overline{h})\Vert_{V}^2 .$$
On the other hand, from hypothesis (\textbf {H2}), one has for $i=1,2$
        \begin{equation*}
\label{hfn}
 \begin{split}
    & \vert F_i(x,t,w_1 +\widetilde{y},w_2 +\widetilde{z})-F_i(x,t,\widetilde{y},\widetilde{z})- \partial_3 F_i(x,t,\widetilde{y},\widetilde{z})w_1 - \partial_4 F_i(x,t,\widetilde{y},\widetilde{z})w_2\vert \\
  & \qquad \leq C (\vert w_1\vert^2 + \vert w_2\vert^2).
 \end{split}
\end{equation*}

Therefore,
\begin{equation*}
		\begin{array}{lll}
			\|\rho_2N(w_1 ,w_2 ,\overline{h})\|^{2}_{L^2 (Q)}&\leq & C\displaystyle\int_{Q}\rho^{2}_{2}|w_1|^{4}dxdt + C\displaystyle\int_{Q}\rho^{2}_{2}|w_2|^{4}dxdt \\
                        && + C\displaystyle\int_{Q} \rho_{2}^2 \frac{\ 1 \ }{\widetilde{y}}\cara_{\omega_0} |\overline{h} w_1|^{2} \,dxdt \\
			&=:& C I_{1} + C I_{2} + C I_3.
		\end{array}
	\end{equation*}

Note that,
\begin{equation}
\label{I1.est}
\begin{split}
    I_{1} = & \ \iint_{Q}\rho_{2}^2  |w_1|^{4} \,dxdt\\
       \leq& \ C \int_{0}^{T}\rho_{2}^2   \Bigl[\int_{0}^1  |w_1|^{4} \,dx\Bigr] \,dt\\
    \leq& \ C \int_{0}^{T}\rho_{2}^2 \rho_{4}^{-4}\,\left[\rho_{4}^4\Vert \sqrt{a}w_{1,x} \Vert_{L^{2}(0,1)}^{4} \right]\,dt\\
        \leq & \ C\int_{0}^{T}\rho_{2}^2 \rho_{4}^{-4} \,\left[ \sup_{t \in [0,T]} \int_{0}^1 \rho_{4}^{4} \vert \sqrt{a}w_{1,x}\vert^{4}\, dx \right] \,dt\\
          \leq & \ C\Vert (G_1,G_2,w_1(\cdot,0), w_2(\cdot,0)) \Vert^{4}_{\mathsf{W}} .
 \end{split}
\end{equation}

In a similar way, $I_2 \leq \Vert (G_1,G_2,w_1(\cdot,0), w_2(\cdot,0)) \Vert^{2}_{\mathsf{W}}$.

Now, we will analyze the term involving the multiplicative control $\overline{h}w_1$
\begin{equation}
\label{I3.est}
 \begin{split}
    I_{3} \leq& \int_{Q}\rho_{2}^2 \cara_{\omega_0} |\overline{h} w_1|^{2} \,dxdt\\
    \leq& \ C \int_{0}^{T}\rho_{2}^2  \Bigl[\int_{0}^1 \cara_{\omega_0} |\overline{h}|^{2} |w_1|^{2} \,dx\Bigr] \,dt\\
    \leq& \  C \int_{0}^{T}\rho_{2}^2 \,\left[ \Vert \overline{h}\Vert_{L^{4}(0,1)}^{2} \,\Vert w_1 \Vert_{L^{4}(0,1)}^{2} \right]\,dt\\
    \leq& \  C \int_{0}^{T}\rho_{2}^2 \rho_{5}^{-2} \rho_{4}^{-2}\,\left[ \rho_{5}^{2} \Vert  \overline{h}_x\Vert_{L^{2}(0,1)}^{2} \,\rho_{4}^{2}\Vert  \sqrt{a}w_{1,x} \Vert_{L^{2}(0,1)}^{2} \right] \,dt\\
        \leq&  \ C \,\Vert \rho_{5} \,\overline{h}\Vert_{L^{\infty}(0,T,H_{a}^{1}(0,1))}^{2} \sup_{t\in [0,T]}\Vert \rho_{4} \sqrt{a}w_{1,x} \Vert_{L^{\infty}(0,T,H_{a}^{1}(0,1))}^{2}\\
    \leq&  \ C \left(\Vert \rho_2 G_1 \Vert^2_{L^2(Q)} + \Vert\rho_2 G_2 \Vert^2_{L^2(Q)} + \Vert w_{1,0}\Vert^2_{L^2 (0,1)} + \Vert w_{2,0} \Vert^2_{L^2 (0,1)}\right)\\
   =&  \ C\Vert (G_1,G_2,w_1(\cdot,0), w_2(\cdot,0)) \Vert^{2}_{\mathsf{W}} .
 \end{split}
\end{equation}
This ends the proof of item (i).

\begin{itemize}
\item[(ii)] The mapping $\mathcal{A}: \mathsf{V}\rightarrow \mathsf{W}$ continuously differentiable:
\end{itemize}

Firstly, let us prove that for any $(w_1 ,w_2 ,\overline{h}) \in \mathsf{V}$ the mapping $\mathcal{A}$ is Gateaux differentiable. To do that, consider the mapping $D\mathcal{A}: \mathsf{V} \rightarrow \mathsf{W}$ given by $$D\mathcal{A}(w_1 ,w_2 ,\overline{h})=DL(w_1 ,w_2 ,\overline{h})+DN(w_1 ,w_2 ,\overline{h});\,\,\text{for}\,\, (w_1 ,w_2 ,\overline{h}) \in \mathsf{V}.$$

From the linearity of $L$, one has $DL(w_1 ,w_2 ,\overline{h}) \rightarrow L(w_1 ,w_2 ,\overline{h})$.

Now, it remains to analyze that $N$ is Gateaux differentiable at any $(w_1 ,w_2 ,\overline{h}) \in \mathsf{V}$. Note that, for any $(w_{1}^{\ast},w_{2}^{\ast},\overline{h}^{\ast}) \in \mathsf{V}$ and $\lambda >0$, one has
\begin{equation*}
\label{DA.estim}
 \begin{split}
    & \Big\| \frac{1}{\lambda}[F_1\left(x,t,w_1+\lambda w_{1}^{\ast} +\widetilde{y},w_2 +\lambda w_{2}^{\ast} +\widetilde{z}\right)-F_1\left(x,t,w_1+\widetilde{y},w_2+\widetilde{z}\right)]  \\
    & \qquad  -\partial_3 F_1 \left(x,t,w_1 +\widetilde{y},w_2 +\widetilde{z}\right)w_1 - \partial_4 F_1 \left(x,t,w_1 +\widetilde{y},w_2 +\widetilde{z}\right)w_2\Big\|_{\mathsf{Z}}  \\
    & + \Big\| \frac{1}{\lambda}[F_2\left(x,t,w_1+\lambda w_{1}^{\ast} +\widetilde{y},w_2 +\lambda w_{2}^{\ast} +\widetilde{z}\right)-F_2\left(x,t,w_1+\widetilde{y},w_2+\widetilde{z}\right)]  \\
     & \qquad  -\partial_3 F_2 \left(x,t,w_1 +\widetilde{y},w_2 +\widetilde{z}\right)w_1 - \partial_4 F_2 \left(x,t,w_1 +\widetilde{y},w_2 +\widetilde{z}\right)w_2\Big\|_{\mathsf{Z}}  \\
    & + \Big\| \frac{1}{\lambda}[(\overline{h}+\lambda \overline{h}^{\ast})\cara_{_{{\omega}_0}} \frac{\ 1 \ }{\widetilde{y}} (w_1 +\lambda w_{1}^{\ast})-\overline{h}\cara_{_{{\omega}_0}} \frac{\ 1 \ }{\widetilde{y}} w_1]\Big\|_{\mathsf{Z}}  \\
    =& \ A_{1}+A_{2}+A_3.
 \end{split}
\end{equation*}
Following the same idea as for $I_2$ in item $\text{(i)}$ and using Mean Value Theorem, we get that $A_1$ and $A_2$ converges to zero as $\lambda \rightarrow 0$. On the other hand, clearly $A_3$ converges to zero as $\lambda \rightarrow 0$. 

So, we conclude that $\mathcal{A}$ is Gateaux differentiable.

In the sequel, we will prove that $D\mathcal{A}: \mathsf{V} \rightarrow \mathsf{W}$ is continuous. To this end, take $(w_1 ,w_2 ,\overline{h}) \in \mathsf{V}$ and let $(w_{1}^{n},w_{2}^{n},\overline{h}^n)_{n=1}^{\infty}$ be a sequence such that
$$(w_{1}^{n},w_{2}^{n},\overline{h}^n) \rightarrow (w_1 ,w_2 ,\overline{h}) \,\,\,\text{in}\,\,\,\mathsf{V}.$$

We have that
\begin{equation*}
\label{DA11.estim}
 \begin{split}
    & \Vert (D\mathcal{A}_{1,1}(w_{1}^{n},w_{2}^{n},\overline{h}^{n})-D\mathcal{A}_{1,1}(w_1,w_2,\overline{h}))(w_{1}^{*},w_{2}^{*},\overline{h}^{*})\Vert_{ \mathsf{W}}^{2}  \\
    & \quad  \leq \int_{Q} \rho_{2}^2 \vert \partial_3 F_1 (x,t,w_{1}^n +\widetilde{y},w_{2}^n +\widetilde{z})-\partial_3 F_1 (x,t,w_1 +\overline{y},w_2 +\overline{z} )\vert^2  \vert w_{1}^{\ast}\vert^2 dxdt \\
    & \qquad + \int_{Q} \rho_{2}^2 \vert \partial_4 F_1 (x,t,w_{1}^n +\widetilde{y},w_{2}^n +\widetilde{z})-\partial_4 F_1 (x,t,w_1 +\overline{y},w_2 +\overline{z} )\vert^2 \vert w_{2}^{\ast}\vert^2 dxdt  \\
           & \qquad + \int_{Q} \rho_{2}^2 \left( (\overline{h}^n -\overline{h})w_{1}^{\ast}+(w_{1}^n -w_1)\overline{h}^{\ast}\right)^2 dxdt\\
           & \quad = D_1 +D_2 + D_3.
 \end{split}
\end{equation*}

Note that, 
\begin{equation*}
\label{D1.estim}
 \begin{split}
    & D_1 \leq \sup_{(x,t) \in Q} \vert \partial_3 F_1 (x,t,w_{1}^n +\widetilde{y},w_{2}^n +\widetilde{z})-\partial_3 F_1 (x,t,w_1 +\overline{y},w_2 +\overline{z} )\vert^2\int_{Q} \rho_{2}^2 \vert w_{1}^{\ast}\vert^2 dxdt \\
    & \quad  \leq \Vert (w_{1}^n ,w_{2}^n ,\overline{h}^n)-(w_1 ,w_2,\overline{h}) \Vert_{V}^2 \Vert (w_{1}^{\ast},w_{2}^{\ast},\overline{h}^{\ast})\Vert_{V}^2 \rightarrow 0 \,\,\,\text{as} \,\,\, n\rightarrow +\infty.
 \end{split}
\end{equation*}

Analogously, $D_2 \rightarrow 0$, as $n \rightarrow +\infty$.

Finally,
\begin{equation*}
\label{D3.estim}
 \begin{split}
    & D_3 \leq 2\int_{Q} \rho_{2}^2 \left( (\overline{h}^n -\overline{h})w_{1}^{\ast}\right)^2 dxdt+ 2\int_{Q}\rho_{2}^{2}\left((w_{1}^n -w_1)\overline{h}^{\ast}\right)^2 dxdt \\
    & \quad = 2E_1 +2E_2.
 \end{split}
\end{equation*}

Next, arguing as in (\ref{I3.est}) and using Corollary \ref{rcont}, we verify that
\begin{equation*}
\label{E1.est}
 \begin{split}
    E_1 =& \int_{Q} \rho_{2}^2 \left( (\overline{h}^n -\overline{h})w_{1}^{\ast}\right)^2 dxdt\\
    \leq& \  C\Vert \rho_5(\overline{h}^n -\overline{h})\Vert_{L^{\infty}(0,T,H_{a}^{1}(0,1))}^{2}\,\sup_{t\in [0,T]}\Vert \rho_{4} \sqrt{a}w_{1,x}^{\ast} \Vert_{L^{\infty}(0,T,H_{a}^{1}(0,1))}^{2} \\
      \leq&  \ C \Vert \rho_5(\overline{h}^n -\overline{h})\Vert_{L^{\infty}(0,T,H_{a}^{1}(0,1))}^{2}\,\Vert (w_{1}^{*},w_{2}^{*},\overline{h}_{1}^{*})  \Vert _{\mathsf{V_1}}^{2}\\
    \leq&  \ C \Vert \rho_5(\overline{h}^n -\overline{h})\Vert_{L^{\infty}(0,T,H_{a}^{1}(0,1))}^{2} \rightarrow 0\\
     \end{split}
\end{equation*}

The estimate for $E_2$ is analogous to the case $E_1$. Thus, we conclude that $D\mathcal{A}_{1,1}$ is a continuous mapping. Similarly, this conclusion remains valid to $D\mathcal{A}_{1,2}$. 

Thereby, we have that $(w_1 ,w_2 ,\overline{h})\mapsto D\mathcal{A} (w_1 ,w_2 ,\overline{h})$ is continuous from $\mathsf{V}$ into $\mathcal{L}(\mathsf{V},\mathsf{W})$.

\begin{itemize}
\item[(iii)] The mapping $D\mathcal{A}(0,0):\mathsf{V}\rightarrow \mathsf{W}$ is onto:
\end{itemize}

Let us fix $(G_1,G_2,w_1(\cdot ,0),w_2(\cdot ,0)) \in \mathsf{W}$. From Theorem \ref{prop:linear_control}, we know that there exists $(w_1,w_2,\overline{h})\in \mathsf{V}$ that solves (\ref{eq:linearized_system1}). In other words,
\begin{equation*}
 \begin{split}
    D\mathcal{A}(0,0,0)(w_{1}^{*},w_{2}^{*},\overline{h}^{*})  =& (D\mathcal{A}_{1,1} (0,0,0),D\mathcal{A}_{1,2} (0,0,0),w_1(\cdot ,0),w_2(\cdot ,0))(w_{1}^{*},w_{2}^{*},\overline{h}^{*})\\
              =& (G_1,G_2,w_{1,0},w_{2,0})\,,
 \end{split}
\end{equation*}
where
\begin{align*}
D\mathcal{A}_{1,1}(0,0,0)(w_{1}^{\ast},w_{2}^{\ast},\overline{h}^{\ast})  & =w_{1,t}^{\ast}-b(t)\left({a}(x)w_{1,x}^{\ast}\right)_x-B_1 (x,t)\sqrt{a}w_{1,x}^{\ast}+d_{11}(x,t)w_{1}^{\ast}\\
 & \quad + d_{12}(x,t)w_{2}^{\ast}-\overline{h}^{\ast} \cara_{_{{\omega}_0}},\\
D\mathcal{A}_{1,2}(0,0,0)(w_{1}^{\ast},w_{2}^{\ast},\overline{h}^{\ast}) &=w_{2,t}^{\ast}-b(t)\left({a}(x)w_{2,x}^{\ast}\right)_x-B_2 (x,t)\sqrt{a}w_{2,x}^{\ast}+d_{21}(x,t)w_{1}^{\ast} \\
  & \quad  +d_{22}(x,t)w_{2}^{\ast}.
\end{align*}

It completes the proof of item (iii).

\vspace{0.3cm}

\noindent {\textbf{Proof of Theorem \ref{th1}}}

From items (i), (ii) and (iii), we have proved that $\mathcal{A}: \mathsf{V}\rightarrow \mathsf{W}$ is a continuously differentiable mapping, whose derivative $S\mathcal{A}(0,0.0)$ is onto. As a result, \emph{Liusternik's Inverse Function Theorem} can be applied in order to obtain a sufficiently small $\varepsilon >0$ and a right inverse mapping $\mathcal{A}^{-1}: B_{\varepsilon}(0) \subset \mathsf{W} \rightarrow \mathsf{V}$ of $\mathcal{A}$. Hence, taking $(w_{1,0},w_{2,0})\in [H_{a}^{1}(0,1)]^2 $ satisfying $\Vert (w_{1,0},w_{2,0}) \Vert_{[H_{a}^{1}(0,1)]^2} \leq \varepsilon$, we have that $(w_1,w_2 ,\overline{h}):=\mathcal{A}^{-1}(G_1,G_2,w_{1,0},w_{2,0})$ solves (\ref{nolineal1}) and consequently (\ref{nolineal}). Furthermore, since $\Vert \rho_{2} w_1\Vert_{L^2(0,1)}^{2} +\Vert \rho_{2} w_2\Vert_{L^2(0,1)}^{2} <+ \infty$, we get $w_1(x,T)=w_2(x,T)=0$ in $(0,1)$.

Next, taking $y=w_1+ \widetilde{y}$ and $z=w_2+ \widetilde{z}$, then  (\ref{eq:PDE}) is satisfied and $y(\cdot,T)=\widetilde{y}(\cdot, T)$ and $z(\cdot,T)=\widetilde{z}(\cdot, T)$ in $(0,1)$.
  \qed

\vspace{0.5cm}

\noindent {\textbf{Proof of Theorem \ref{th2}}}

Let $(w_{1,0},w_{2,0})\in [H_{a}^{1}(0,1)]^2 $ and consider the system (\ref{nolineal1}) with control $\overline{h} \equiv 0$. The estimate (\ref{cl2}) in Theorem \ref{twpds} guarantees the existence of $\widetilde{T}>0$ large enough such that $$\Vert w_1(\cdot ,\widetilde{T}), w_2(\cdot ,\widetilde{T})\Vert_{[H_{a}^{1}]^2 (0,1)} \leq C e^{-\mu \widetilde{T}}\,.$$

Now, suppose $\varepsilon >0$ is such that 
\begin{equation}\label{est.varepsilon}
\frac{1}{\varepsilon} \leq \frac{1}{C e^{-\mu \widetilde{T}}}
\end{equation}
and denote 
\begin{equation}\label{est.widetildeT}
(w_{1,\widetilde{T}},w_{2,\widetilde{T}}):=(w_1(\cdot ,\widetilde{T}),w_2(\cdot ,\widetilde{T}))
\end{equation}

Thereby, from (\ref{est.varepsilon}) and (\ref{est.widetildeT}), one has $\Vert (w_{1,\widetilde{T}},w_{2,\widetilde{T}}) \Vert_{[H_{a}^{1}]^2 (0,1)} \leq \varepsilon$. So, now we are in the same conditions of Theorem \ref{th1}. Then, there exists a control $\overline{h} \in L^2(\omega_0 \times (0,T))$ driving the solution $(w_1,w_2)$ from $(w_1(\cdot ,\widetilde{T}),w_2(\cdot ,\widetilde{T}))$ to $0$ in time $T>\widetilde{T}$, as desired.

Since $y=w_1+ \widetilde{y}$ and $z=w_2+ \widetilde{z}$, then  (\ref{eq:PDE}) is satisfied and $y(\cdot,T)=\widetilde{y}(\cdot, T)$ and $z(\cdot,T)=\widetilde{z}(\cdot, T)$ in $(0,1)$.
  \qed

\section{Additional comments}
\label{sec:final_remarks}
In the present work, we established the decay of solutions and analyzed the local and global exact controllability to trajectories of system (\ref{eq:PDE}) with distributed controls locally supported in space. These results can be extended to other settings. In particular, one may consider nonlinear systems of the form
\begin{equation*}\label{op1}
\left\{
\begin{array}
    [c]{lll}%
    y_t - b(t)\Big(\beta_1 (x,\displaystyle\int_{0}^{1}y )  y_{x}\Big)_{x} + B_1 (x,t) \sqrt{a} y_x +F_1(x,t,y,z)=h\cara_{\omega}y & (x,t) \in (0,1)\times (0,T),&
    \\
    z_t - b(t)\Big(\beta_2 (x,\displaystyle\int_{0}^{1}z )  z_{x}\Big)_{x} + B_2 (x,t) \sqrt{a} z_x+ F_2(x,t,y,z)=0 & (x,t) \in (0,1)\times (0,T),&
    \\
    y(0,t)=y(1,t)=z(0,t)=z(1,t)=0 & t \in (0,T), & \\
    y(x,0)=y_{0}(x), \ z(x,0)=z_{0}(x) & x \in (0,1), & 
\end{array}
\right.  %
\end{equation*}
where, for $i \in \{1,2\}$, $\beta_i$ is a separated variables function given by $\beta_i(x,r)=\mu_i (r)a(x)$ such that $\mu_i:\mathbb{R} \rightarrow \mathbb{R}$ is a $C^1$ function with bounded derivative. The function $\beta_i$ defines an operator which degenerates at $x=0$ and has a nonlocal term. More precisely, the function $a$ behaves $x^{\alpha}$, with $\alpha \in (0,1)$.

  An interesting case deals with the controllability of one-phase Stefan-like problems with the following structure:
  \begin{equation*}\label{op2}
\left\{
\begin{array}
    [c]{lll}%
    y_t - b(t)\left(a(x) y_{x}\right)_{x} + B(x,t) \sqrt{a} y_x + F(x,t,y)=h\cara_{\omega}y, &  &
    (x,t) \in Q_L ,\\
    y(0,t)=y(L(t),t)=0, &  & t \in (0,T),\\
    y(x,0)=y_{0}(x), &  & (x,t) \in (0,L_0).
\end{array}
\right.  %
\end{equation*}
$$
b(t)a(x)y_{x}(L(t),t)=-L'(t),\quad t \in (0,T),
$$
where $Q_L$ stands for the following set:
$$
Q_L=\{ (x,t);\,\, x\in (0,L(t)),\,\,t \in (0,T)\},
$$
with $0<L_0<L(t)<D,\,\,t \in (0,T)$.

\vspace{0.3cm}
Some of these extensions will be considered in the near future.

\vspace{0.8cm}

\noindent\textbf{Acknowledgments}

This study was financed in part by the Coordenação de Aperfeiçoamento de Pessoal de Nível Superior - Brasil (CAPES) - Finance Code 001.
A.S.G and L.Y. were partially supported by CAPES-Brazil.

\bibliographystyle{abbrv}
\bibliography{referencias}

\end{document}